\documentclass{article}
\usepackage{lipsum} 
\usepackage{amsfonts}
\usepackage{amsmath}		
\usepackage{amssymb}
\usepackage{cite}
\usepackage{float}
\usepackage{graphicx}
\usepackage{booktabs, multicol, multirow}
\usepackage{bigstrut}
\usepackage{array}
\usepackage{tabularx}
\usepackage{pdflscape}
\usepackage{adjustbox}
\usepackage{longtable}
\usepackage{makeidx}
\usepackage{rotating}
\usepackage{wasysym}
\usepackage{graphicx}
\usepackage{graphics}
\usepackage{float}
\usepackage{latexsym}
\usepackage{amsfonts}
\usepackage{mathptmx}
\usepackage[ruled,lined]{algorithm2e}
\usepackage{authblk}
\usepackage{varwidth}
\usepackage{makeidx}
\usepackage{dsfont}
\usepackage[sc]{mathpazo} 
\usepackage[T1]{fontenc} 
\linespread{1.05} 
\usepackage{microtype} 

\usepackage[hmarginratio=1:1,top=23mm,columnsep=20pt]{geometry} 
\usepackage{chngpage} 
\usepackage{hyperref} 

\usepackage[hang, small,labelfont=bf,up,textfont=it,up]{caption} 
\usepackage{booktabs} 
\usepackage{float} 

\usepackage{lettrine} 
\usepackage{paralist} 

\usepackage{abstract} 
\makeatletter 
\let\@fnsymbol\normalsize
\makeatother
\usepackage{titlesec} 
\titleformat{\section}[block]{\large\scshape\centering}{\thesection.}{1em}{} 
\usepackage{fancyhdr} 
\fancyhead{} 
\fancyfoot{} 
\newtheorem{theorem}{Theorem}[section]
\newtheorem{lemma}[theorem]{Lemma}

\newtheorem{remark}[theorem]{Remark}
\newtheorem{assumption}[theorem]{Assumption}

\def\QED{~\rule[-1pt]{6pt}{6pt}\par\medskip}

\newenvironment{proof}{{\bf Proof\ }}{ \hfill \QED}

\date{}
\begin{document}
\thispagestyle{empty}
\pagenumbering{arabic}
\setlength {\baselineskip} {1 \baselineskip}

\title{\vspace{-15mm}\fontsize{18pt}{10pt}\selectfont\textbf{A diagonal PRP-type projection method for convex constrained nonlinear monotone equations}} 

\author{{Hassan Mohammad$^{*}$ }
  \hfill    \\ 
 {\footnotesize {\it  $^{*}$ Department of Mathematical Sciences, Faculty of Physical Sciences, Bayero University, Kano.  Kano 700241, Nigeria.}}\\
 \hfill    \\ 
     {\footnotesize  Corresponding Author E-mail address: hmuhd.mth@buk.edu.ng }
      }

\maketitle 



\begin{abstract}
\noindent Iterative methods for nonlinear monotone equations do not require the differentiability assumption on the residual function. This special property of the methods makes them suitable for solving large-scale nonsmooth monotone equations. In this work, we present a diagonal Polak-Ribi$\grave{e}$re-Polyk (PRP) conjugate gradient-type method for solving large-scale nonlinear monotone equations with convex constraints. The search direction is a combine form  of a multivariate (\emph{diagonal}) spectral method and a modified PRP conjugate gradient method. Proper safeguards are devised to ensure positive definiteness of the diagonal matrix associated with the search direction. Based on Lipschitz continuity and monotonicity assumptions the method is shown to be globally convergent. Numerical results are presented by means of comparative experiments with recently proposed multivariate spectral conjugate gradient-type method.
\end{abstract}
\textbf{Keywords}: Nonlinear monotone equations, diagonal spectral gradient method, PRP conjugate gradient method \\ 
\textbf{Mathematics Subject Classification}: 65K05, 90C06, 90C56.
\section{Introduction}
\label{intro}
In this paper, we address the following constrained monotone equations
\begin{equation}\label{eq1}
F(x^*)=0, ~~x^*\in \Omega, 
\end{equation}
where $\textbf{F} : \Omega \to \mathbb{R}^n$ is continuous and monotone function, on a non-empty closed convex set $\Omega \in \mathbb{R}^n$. The monotonicity of $F$ here means  $\langle\,F(x)-F(y),x-y\rangle \ge 0, ~~\forall x,y \in \Omega $

The system of monotone equations has various applications \cite{li2015s,gu2015,zheng2015image,eshaghnezhad2017}, e.g  the $\ell _1$-norm problem arising from compressing sensing \cite{xiao13,liu15,liu2017gradient}, generalized proximal algorithm with Bregman distances \cite{iusem1997}, variational inequalities problems \cite{qian2013,fukushima92}, and optimal power flow equations \cite{wood2012, ghaddar2016} among others.

At the case where $\Omega =\mathbb{R}^n$, Solodov and Svaiter \cite{solodov1998} combined Newton's method and projection strategy and proposed a global convergent inexact Newton method for system of monotone equations without the differentiability assumption. Wang et. al \cite{wang07}  extended the work by Solodov and Svaiter to solve convex constrained monotone equations. Since then many methods for solving system of nonlinear monotone equations have been proposed (see, e.g. \cite{yu09,wang09,lacruz14,sun2015,liu2014,wang2016,ou2018,ou2017,hu2015,liu2015,nayuan2017,zhang2011,mohdaba17,zhou2015prp,lacruz2017,li2011,cheng2009prp,yan2010,papp2015fr,abapoom18}, among others). Specifically, Ma and Wang \cite{ma2009} proposed a modified projection method for solving a system of monotone equations with convex constraints. Though the projection methods for convex constrained monotone equations proposed in \cite{wang07} and \cite{ma2009} have a very good numerical performance, they are not suitable for solving large-scale monotone equations because they require matrix storage.

By taking the advantage of the good performance of the multivariate (\emph{diagonal}) spectral gradient method by Han et. al. \cite{han2008}, Yu et. al. \cite{yu2013} proposed a multivariate spectral gradient projection method for nonlinear monotone equations with convex constraints. Recently, Liu and Li \cite{liu2017} proposed a multivariate spectral conjugate gradient-type projection algorithm for constrained nonlinear monotone equations by combining the multivariate spectral gradient method and the Dai-Yuan (DY) conjugate gradient method. 

Motivated by the development of the diagonal spectral gradient projection approach for constrained nonlinear monotone equations, in this paper, we extend the sufficient descent modified  Polak-Ribi$\grave{e}$re-Polyk (PRP) nonlinear conjugate gradient method proposed by Dong et. al.\cite{dong15} to solve nonlinear monotone equations with convex constraints. We show that the proposed method has some attractive properties, for example, it is a derivative-free and matrix-free method. Hence, it is suitable for solving non-smooth and large-scale problems. Under the monotonicity and Lipschitz continuity assumptions on $F$, we show that the proposed algorithm is globally convergent.

The outstanding part of this paper is organized as follows. In Section 2 we present the algorithm of the proposed method. In Section 3 we establish the global convergence of the proposed method. In section 4 we present the numerical experiments, and conclusions in Section 5. Unless otherwise stated, throughout this paper $\|.\|$ stands for the Euclidean norm of vectors and the induced $2$-norm of matrices,  $\langle\,.~,~.\rangle$ is the inner product of vectors in $\mathbb{R}^n$. \section{A diagonal PRP-type projection method}
We begin this section by describing the unconstrained convex optimization problem

\begin{equation}\label{eq2}
\min _{x\in \mathbb{R}^n}f(x),
\end{equation}
where $f:\mathbb{R}^n\to \mathbb{R}$ is a continuously differentiable function with gradient $g(x_k)=\nabla f(x_k)$. The descent modified Polak-Ribi$\grave{e}$re-Polyk (PRP) conjugate gradient method proposed by Dong et. al. \cite{dong15} generates a sequence $\{x_k\}$ that satisfies
\begin{equation}\label{eqscheme}
    x_{k+1}=x_k+\alpha _kd_k, ~~~k=0,1,...,
\end{equation}
where where $\alpha _k>0$ is a steplength, $d_k$ is a search direction generated by
\begin{equation}\label{eqdir1}
d_k=
    \begin{cases}
    -g(x_0), ~~~\text{if } k=0\\[10pt]
    -g(x_k)+\beta _kd_{k-1}, ~~~\text{if } k\geq 1,
    \end{cases}
\end{equation}
where $\beta _k$ is defined by
\begin{equation}\label{eqbita1}
    \beta _k=\max \biggl\{0, \frac{\langle g(x_k),y_{k-1}\rangle}{\|g(x_{k-1})\|^2}-t\frac{\langle g(x_k),d_{k-1}\rangle}{\|g(x_{k-1})\|^4}\biggl(\frac{\langle g(x_k),y_{k-1}\rangle}{\|g(x_{k})\|}\biggr)^2\biggr\},
\end{equation}
where $y_{k-1}=g(x_k)-g(x_{k-1}),~~t>\frac{1}{4}$ is a given positive constant.

Based on the formulae (\ref{eqscheme})-(\ref{eqbita1}), we introduce our method for solving (\ref{eq1}). Inspired by
(\ref{eqdir1}) we redefine $d_k$ as
\begin{equation}\label{eqdir2}
d_k=
    \begin{cases}
    -D_0F(x_0), ~~~\text{if } k=0\\[10pt]
    -D_kF(x_k)+\beta _kd_{k-1}, ~~~\text{if } k\geq 1,
    \end{cases}
\end{equation}
where $D_0=I_n,~ D_k$ is a positive definite diagonal matrix given by 
\begin{equation}\label{eqdiag}
    D_k=\text{diag}\biggl(\frac{1}{\lambda _k^1},\frac{1}{\lambda _k^2},...,\frac{1}{\lambda _k^n}\biggr),
\end{equation}
\begin{equation}\label{eqlambda}
\lambda_{k}^i=
\begin{cases}
{y_{k-1}^i\over s_{k-1}^i}, & \text{ if } {y_{k-1}^i\over s_{k-1}^i}> 0\\[15pt]
\delta,  & \text{ if }{y_{k-1}^i\over s_{k-1}^i}\le  0 \text{ or } s_{k-1}^i=0.
\end{cases}
\end{equation}
where $y_{k-1}=F(x_k)-F(x_{k-1}),~s_{k-1}=x_k-x_{k-1}, \text{ and }\delta >0$ is chosen as a safeguard that ensures all the entries of the diagonal matrix are positive. $\beta _k$ is defined as 
\begin{equation}\label{eqbita2}
    \beta _k=\max \biggl\{0, \frac{\langle F(x_k),y_{k-1}\rangle}{\|F(x_{k-1})\|^2}-t\frac{\langle F(x_k),d_{k-1}\rangle}{\|F(x_{k-1})\|^4}\biggl(\frac{\langle F(x_k),y_{k-1}\rangle}{\|F(x_{k})\|}\biggr)^2\biggr\}.
\end{equation}

Observe that, if the diagonal components the matrix $D_k$ are negative, a non-negative scalar $\delta$ is used as a safeguard. The common choice of $\delta$ in most of the previous works \cite{han2008,liu2017,yu2013} is \begin{equation}\label{eqsafe01}
\delta = \frac{\langle s_{k-1},~y_{k-1}\rangle}{\langle s_{k-1},~s_{k-1}\rangle}.
\end{equation} 
However, we feel that there may be better choices for replacing $\lambda _k^i$ when it is negative. In what follows, we present a simple approach for choosing the value of the scalar $\delta$.

The scalar $\lambda _{k}^i$  is non-positive if and only if 
\begin{align*}
    & s_{k-1}^i<0 \text{ and } y_{k-1}^i\geq 0, \\[-5.5pt]
    \text{or}\\[-5.5pt] 
    & s_{k-1}^i>0 \text{ and } y_{k-1}^i\leq 0. 
\end{align*}

In the first case, it is better to choose a non-positive scalar that carries some information about the current value of $y_{k-1}$. Similarly, in the second case, it is better to choose a non-negative scalar that carries some information about the current value of $y_{k-1}$. We now present our simple safeguard for choosing the replacement of $\lambda _{k}^i$ in case it is negative. We consider the following two cases, in which $\theta \in (0,1)$ is a shrinking factor, and $\epsilon >0$ is a tolerance for ensuring strictly positive values, $i=1,2,..,n.$\\[-5pt]

\textbf{\underline{CASE I}}: Assume that $s_{k-1}^i >0$.
\[ \quad \text{If } y_{k-1}^i\leq 0 \implies F(x_{k})^i-F(x_{k-1})^i\leq 0 \implies F(x_{k})^i\leq F(x_{k-1})^i.\]

In this case, we redefine
\begin{equation}\label{safea1}
  y_{k-1}^i=\theta \max \bigl\{\max \bigl\{ | F(x_{k})^i|,|F(x_{k-1})^i|\bigr\}, \epsilon\bigr\},
\end{equation}
so that $\lambda_{k}^i={y_{k-1}^i\over s_{k-1}^i}> 0.$\\[-5pt]

\textbf{\underline{CASE II}}: Assume that $s_{k-1}^i <0.$
\[ \quad \text{If } y_{k-1}^i\geq 0 \implies F(x_{k})^i-F(x_{k-1})^i\geq 0 \implies F(x_{k})^i\geq F(x_{k-1})^i.\]

In this case, we redefine
\begin{equation}\label{safea2}
 y_{k-1}^i=-\theta \max \bigl\{\max \bigl\{ | F(x_{k})^i|,|F(x_{k-1})^i|\bigr\}, \epsilon\bigr\},
\end{equation}
so that $\lambda_{k}^i={y_{k-1}^i\over s_{k-1}^i}> 0.$

Next we state our algorithm. We used the projection operator  \[P_{\Omega}[x]:=\text{arg min}_{y\in \Omega} \{\|y-x\|\},\] that denotes the projection of $x$ on the closed convex set $\Omega$, to ensure the new point generated by our algorithm is a member of the domain of definition. It is not difficult to notice that if $x\in \Omega$, the value of $P_{\Omega}[x]$ is equal to $x$. In addition, $P_{\Omega}[x]$ is nonexpansive, that is
\begin{equation}\label{eqne}
    \|P_{\Omega}[x]-P_{\Omega}[y]\|\leq \|x-y\|, ~~~ \forall x,y\in \mathbb{R}^n.
\end{equation}
\vspace{-0.7cm}
\IncMargin{0.5em}
\begin{algorithm}
\caption{{Diagonal PRP Projection Method (DPPM)}}\label{algo_dpm}
\SetKwData{Left}{left}\SetKwData{This}{this}\SetKwData{Up}{up}
\SetKwFunction{Union}{Union}\SetKwFunction{FindCompress}{FindCompress}
\SetKwInOut{Input}{input}\SetKwInOut{Output}{output}
\nl\Input{ Given $x_0 \in \mathbb{R}^n,~ \Omega \in \mathbb{R}^n,~ \rho ,~\sigma ,~\theta, \in (0,1)$, ~$0<\ell \le 1\le u$  and $\epsilon,~\mu ,~ tol >0$.}
\BlankLine
\nl Set $k=0,~ D _k=I$, where $I$ is an $n\times n$ identity matrix\;
\nl Compute $\|F(x_k)\|$\;
\nl\While{$\|F(x_k)\|> tol$ }{
 \nl Compute $d_k=-D_kF(x_k)$\;\label{ln5}
 \nl Initialize $m =0$\;
 \nl \While{$\langle\,F(x_k+\beta \rho ^m d_k),~d_k\rangle >-\sigma \beta \rho ^m \|F(x_k+\beta \rho ^md_k)\|\|d_k\|^2$ \label{ln7}}{
\nl $m=m+1$\;
\nl $\alpha = \beta \rho ^m$\;
}
\nl Set $\alpha _k=\alpha$\;
\nl Compute $z_k=x_k+\alpha _kd_k$\;

\nl \If{$z_k\in \Omega \text{ and }F(z_k)\leq tol$ }{$x_{k+1}=z_k$}
\nl $x_{k+1}=P_{\Omega}[x_k-\xi _k F(x_k)],$ where
 \(\xi _k =\frac{\langle\, x_k-z_k,~F(z_k)\rangle}{\|F(z_k)\|^2}\)\;
 \nl Compute $s_k=x_{k+1}-x_k, ~~ y_k=F(x_{k+1})-F(x_{k})$\;
 \nl Compute \label{ln15}
 \begin{equation}\label{sdm3}
\lambda_{k+1}^i=
\begin{cases}
\max \bigl\{\min \bigl\{ \frac{y_{k}^i}{s_{k}^i},~~ u\bigr\},~~\ell \big\},  & \text{ if }s_{k}^i\ne 0 \\
1, &  \text{ if }s_{k}^i= 0.
\end{cases}
\end{equation} 
where $y_{k}^i$ is safeguarded for possible different signs with $s_k^i,$ using Equations (\ref{safea1}) and (\ref{safea2})\;
\nl Compute $D_{k+1}$ by (\ref{eqdiag}) and $\beta _{k+1}$ by  (\ref{eqbita2})\;\label{ln16}
\nl Compute \label{ln17}
\begin{equation}\label{eqnewdir}
d_{k+1}=
\begin{cases}
-D_{k+1}F(x_{k+1}), & \text{ if  } |\langle F(x_{k+1}),y_k\rangle|\|d_{k}\|\geq \mu \|F(x_{k+1})\|,\\[10pt]
-D_{k+1}F(x_{k+1})+\beta _{k+1} d_{k}, & \text{ if  } |\langle F(x_{k+1}),y_k\rangle|\|d_{k}\|< \mu \|F(x_{k+1})\|.
\end{cases}
\end{equation}

\nl Set $k=k+1$.
}
\end{algorithm}\DecMargin{1em}
\vspace{-0.7cm}
\begin{remark}
\item Since the matrix $D_k$ is diagonal, the product at line \ref{ln5} of Algorithm \ref{algo_dpm} is simply the  product between the elements of $D_k$ and the corresponding components of $F(x_k)$, computed in $\emph{O}(n)$ operations. Also by definition of $D_k$ at line \ref{ln16}, $D_k$ is positive definite.
\end{remark}
\begin{remark}
\item The sequence $\{\lambda _{k+1} ^i\}$ at line \ref{ln15}  is uniformly bounded for each $i\text{ and }k$. It is clear that \[\ell \leq \lambda _{k+1}^i\leq u~~\forall i, ~~\forall k.\]
\end{remark}
\vspace{-0.7cm}
\begin{remark}
\item The update rule for the search direction at line \ref{ln17}, is given specifically to ensure global convergence of the Algorithm DPPM.
\end{remark}
\vspace{-0.1cm}
Now, we show a nice property of the search direction defined in (\ref{eqnewdir}), which is quite significance in the  construction of our algorithm. The proof is similar to \cite{dong15} [Lemma 2.1] but with different definition for the right-hand term of the search direction.
\begin{lemma}\label{lm1}
Let $\{d_k\}$ be the sequence generated by (\ref{eqnewdir}), $\beta _k$ is given by (\ref{eqbita2}). Then there exists a constant $c>0$ such that
\begin{equation}\label{eqsd}
   \bigl\langle\,F(x_k),~d_k\bigr\rangle \leq -c\bigl\|F(x_k)\bigr\|^2,~~\forall k\ge 0 .
\end{equation}
\end{lemma}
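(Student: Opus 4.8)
The plan is to verify the bound (\ref{eqsd}) separately for $k=0$ and for a general iterate produced by (\ref{eqnewdir}), and then to take $c$ as the smallest of the per-case constants. For $k=0$ the direction is $d_0=-D_0F(x_0)=-F(x_0)$, so $\langle F(x_0),d_0\rangle=-\|F(x_0)\|^2$, which satisfies (\ref{eqsd}) with constant $1$. For $k\ge 1$ I would split the inner product $\langle F(x_k),d_k\rangle$ into the diagonal contribution coming from $-D_kF(x_k)$ and the conjugacy contribution $\beta_k\langle F(x_k),d_{k-1}\rangle$, and bound each piece. (I state the lemma for $d_k$; the algorithm produces it as $d_{k+1}$ through (\ref{eqnewdir}), whose two branches correspond exactly to whether the term $\beta_k d_{k-1}$ is present.)

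For the diagonal contribution I would use that $D_k$ is positive definite diagonal with entries $1/\lambda_k^i$ and that the clamping in (\ref{sdm3}) keeps $\lambda_k^i\in[\ell,u]$. Hence every eigenvalue of $D_k$ is at least $1/u$, giving $\langle F(x_k),D_kF(x_k)\rangle\ge \tfrac1u\|F(x_k)\|^2$, so that $-\langle F(x_k),D_kF(x_k)\rangle\le -\tfrac1u\|F(x_k)\|^2$. In the first branch of (\ref{eqnewdir}), where $d_k=-D_kF(x_k)$, this alone yields $\langle F(x_k),d_k\rangle\le -\tfrac1u\|F(x_k)\|^2$ and finishes that case.

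The crux is the second branch, $d_k=-D_kF(x_k)+\beta_kd_{k-1}$, where I must control the cross term $\beta_k\langle F(x_k),d_{k-1}\rangle$. Since $\beta_k=\max\{0,\cdot\}\ge 0$, if the maximum is attained at $0$ the term vanishes and the diagonal bound suffices. If $\beta_k>0$, I would substitute (\ref{eqbita2}) and set $w:=\langle F(x_k),y_{k-1}\rangle\,\langle F(x_k),d_{k-1}\rangle/\|F(x_{k-1})\|^2$, so that
\begin{equation*}
\beta_k\langle F(x_k),d_{k-1}\rangle = w - \frac{t\,w^2}{\|F(x_k)\|^2} = -\frac{t}{\|F(x_k)\|^2}\Bigl(w-\tfrac{\|F(x_k)\|^2}{2t}\Bigr)^2+\frac{\|F(x_k)\|^2}{4t}\le \frac{\|F(x_k)\|^2}{4t}.
\end{equation*}
Completing the square is the key algebraic step, and the bound holds regardless of the sign of $w$. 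Combining it with the diagonal bound gives $\langle F(x_k),d_k\rangle\le -\bigl(\tfrac1u-\tfrac1{4t}\bigr)\|F(x_k)\|^2$.

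I expect the main obstacle to be conceptual rather than computational: guaranteeing $c>0$, i.e. the interplay $u<4t$ between the upper safeguard $u$ on the spectral coefficients and the parameter $t$. In the original method of Dong et al. the preconditioner is the identity ($u=1$) and the hypothesis $t>\tfrac14$ suffices; with a genuine diagonal matrix one needs $t$ large enough relative to $u$ for the constant to remain positive. I would therefore record the constant as $c=\min\{1,\tfrac1u-\tfrac1{4t}\}=\tfrac1u-\tfrac1{4t}$ (using $u\ge 1$), make the requirement $u<4t$ explicit, and observe that this single value of $c$ closes the argument across $k=0$, the first branch, and the second branch simultaneously.
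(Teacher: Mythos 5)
Your proposal is correct and follows essentially the same route as the paper: the same split into $k=0$, the pure-diagonal case, and the $\beta_k>0$ case, the same spectral bound $\langle F(x_k),D_kF(x_k)\rangle\ge\tfrac1u\|F(x_k)\|^2$ from $\lambda_k^i\in[\ell,u]$, and the same final constant $c=\tfrac1u-\tfrac1{4t}$ with the requirement $t>\tfrac{u}{4}$ (your completing-the-square bound $\beta_k\langle F(x_k),d_{k-1}\rangle\le\|F(x_k)\|^2/(4t)$ is algebraically the same step the paper performs via $2ab\le a^2+b^2$). Your version is in fact slightly cleaner in that it explicitly treats the first branch of (\ref{eqnewdir}) and flags the $d_k$ versus $d_{k+1}$ indexing, which the paper's proof glosses over.
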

\begin{proof}
For $k=0$, we have 
\begin{equation*}
    \begin{split}
        \langle F(x_0),~d_0\rangle &=\langle F(x_0),~-D_0F(x_0)\rangle\\
        &=\langle F(x_0),~-F(x_0)\rangle \quad \text{since }D_0=I \\
        &=-\|F(x_0)\|^2.
    \end{split}
\end{equation*}
For $k\geq 1,$ we consider two cases:

\textbf{Case (i)} If \(\frac{\langle F(x_k),y_{k-1}\rangle}{\|F(x_{k-1})\|^2}\leq t\frac{\langle F(x_k),d_{k-1}\rangle}{\|F(x_{k-1})\|^4}\biggl(\frac{\langle F(x_k),y_{k-1}\rangle}{\|F(x_{k})\|}\biggr)^2 \), then \(\beta _k=0\). Therefore,
\begin{equation*}
    \begin{split}
        \langle F(x_k),~d_k\rangle &=\langle F(x_k),~-D_kF(x_k)\rangle\\
        &=\biggl\langle F(x_k),~\text{diag}\biggl(\frac{-1}{\lambda _k^1},\frac{-1}{\lambda _k^2},...,\frac{-1}{\lambda _k^n}\biggr)F(x_k)\biggr\rangle \\
        & \leq \biggl\langle F(x_k),~\text{diag}\biggl(\frac{-1}{u },\frac{-1}{u},...,\frac{-1}{u}\biggr)F(x_k)\biggr\rangle \\
        & =- \frac{1}{u }\langle F(x_k),~F(x_k)\rangle\\
        &=-c\|F(x_0)\|^2, \quad 0<c=\frac{1}{u},
    \end{split}
\end{equation*}
where the first inequality follows from the boundedness of $\frac{-1}{\lambda _k^i}, ~~~i=1,2,\ldots n.$ 

\textbf{Case (ii)} If \(\frac{\langle F(x_k),y_{k-1}\rangle}{\|F(x_{k-1})\|^2}> t\frac{\langle F(x_k),d_{k-1}\rangle}{\|F(x_{k-1})\|^4}\biggl(\frac{\langle F(x_k),y_{k-1}\rangle}{\|F(x_{k})\|}\biggr)^2 \), then we have
\begin{equation*}
    \begin{split}
        \langle F(x_k),~d_k\rangle &=\langle F(x_k),~-D_kF(x_k)+\beta _kd_{k-1}\rangle\\
        &=\biggl\langle F(x_k),~\text{diag}\biggl(\frac{-1}{\lambda _k^1},\frac{-1}{\lambda _k^2},...,\frac{-1}{\lambda _k^n}\biggr)F(x_k)\biggr\rangle\\
        & \quad \quad \quad +\biggl\langle F(x_k),~\frac{\langle F(x_k),y_{k-1}\rangle}{\|F(x_{k-1})\|^2}d_{k-1}-t\frac{\langle F(x_k),d_{k-1}\rangle}{\|F(x_{k-1})\|^4}\biggl(\frac{\langle F(x_k),y_{k-1}\rangle}{\|F(x_{k})\|}\biggr)^2d_{k-1}\biggr\rangle \\
        & \leq \biggl\langle F(x_k),~\text{diag}\biggl(\frac{-1}{u },\frac{-1}{u},...,\frac{-1}{u}\biggr)F(x_k)\biggr\rangle \\
        &\quad \quad\quad +\biggl\langle F(x_k),~\frac{\langle F(x_k),y_{k-1}\rangle}{\|F(x_{k-1})\|^2}d_{k-1}\biggr\rangle+\biggl\langle F(x_k),~-t\frac{\langle F(x_k),d_{k-1}\rangle}{\|F(x_{k-1})\|^4}\biggl(\frac{\langle F(x_k),y_{k-1}\rangle}{\|F(x_{k})\|}\biggr)^2d_{k-1}\biggr\rangle
           \end{split}
\end{equation*}
\begin{equation}\label{eqcase2}
    \begin{split}
        &\quad \quad\quad\quad\quad =- \biggl(\frac{1}{u}-\frac{1}{4t}\biggr)\biggl\langle F(x_k),~F(x_k)\biggr\rangle +\biggl[-\frac{\bigl\langle F(x_k),~F(x_k)\bigr\rangle}{4t}+ \frac{\langle F(x_k),y_{k-1}\rangle}{\|F(x_{k-1})\|^2}\biggl\langle F(x_k),~d_{k-1}\biggr\rangle \\
        &\quad \quad\quad\quad\quad\quad \quad \quad-t\frac{\langle F(x_k),d_{k-1}\rangle^2}{\|F(x_{k-1})\|^4}\biggl(\frac{\langle F(x_k),y_{k-1}\rangle}{\|F(x_{k})\|}\biggr)^2\biggr]. 
    \end{split}
\end{equation}
By applying the inequality $2ab\leq a^2+b^2$ to the second term of (\ref{eqcase2}) with $a=\frac{1}{2\sqrt{t}}\|F(x_k)\|$ and $b=\sqrt{t}\frac{\langle F(x_k),d_{k-1}\rangle}{\|F(x_{k-1})\|^2}~.~\frac{\langle F(x_k),y_{k-1}\rangle}{\|F(x_{k})\|},$ we have 
 \[ \bigl\langle\,F(x_k),~d_k\bigr\rangle \leq -c\bigl\|F(x_k)\bigr\|^2,\]
 where $0<c=\frac{1}{u}-\frac{1}{4t} \implies t>\frac{u}{4},\quad u\ge 1$. 
 \end{proof}
\section{Convergence Analysis}
We now turn to analyze the global convergence of Algorithm \ref{algo_dpm} (DPPM). First we state the following standard assumptions:
\begin{assumption}\label{ass1}
The function $F:\mathbb{R}^n\to \mathbb{R}^n$ is continuously differentiable and monotone.
\end{assumption}
\begin{assumption}
The function $F$ is Lipschitz continuous on $\Omega$, i.e., there exists a constant $L>0$ such that
\begin{equation}\label{eqlc}
    \|F(x)-F(y)\|\leq L\|x-y\|, ~~~\forall x,y\in \Omega.
\end{equation}
\end{assumption}
\begin{assumption}\label{ass2}
The solution set of (\ref{eq1}) denoted by $\Phi$ is convex and nonempty.
\end{assumption}
In addition, we also assume that $F(x)\ne 0, \quad \forall x\ne x^*$, otherwise the solution of Equation (\ref{eq1}) has been obtained.
\begin{lemma}\label{lm2}
The Algorithm \ref{algo_dpm} (DPPM) is well defined.
\end{lemma}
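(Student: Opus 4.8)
The plan is to read ``well defined'' as the requirement that every iteration of Algorithm \ref{algo_dpm} can actually be carried out to completion. Concretely, two things must be verified: that none of the denominators appearing in the computed quantities vanish, and --- the one nontrivial point --- that the inner line-search loop at line \ref{ln7} exits after finitely many increments of $m$. I would dispose of the first quickly: whenever the algorithm proceeds past the stopping test it has not yet found a solution, so $F(x_k)\neq 0$, $F(x_{k-1})\neq 0$, and (since the step computing $\xi_k$ is only reached when $\|F(z_k)\|>tol$) also $F(z_k)\neq 0$; thus the norms in the denominators of $\beta_k$ in (\ref{eqbita2}) and of $\xi_k$ are strictly positive. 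The safeguards (\ref{safea1})--(\ref{safea2}) together with the clipping in (\ref{sdm3}) force $\ell\le\lambda_{k+1}^i\le u$ for every $i$, so each $\lambda_k^i$ is bounded away from zero and $D_k$ in (\ref{eqdiag}) is a well-defined positive definite matrix.

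The substantive step is the finiteness of the line search, which I would prove by contradiction. Fix $k$ and suppose the loop never terminates, so the test at line \ref{ln7} holds for every nonnegative integer $m$:
\begin{equation*}
\langle F(x_k+\beta\rho^m d_k),\, d_k\rangle > -\sigma\beta\rho^m\,\|F(x_k+\beta\rho^m d_k)\|\,\|d_k\|^2 .
\end{equation*}
Because $\rho\in(0,1)$, we have $\beta\rho^m\to 0$ as $m\to\infty$, so $x_k+\beta\rho^m d_k\to x_k$. By continuity of $F$ (Assumption \ref{ass1}) the left-hand side tends to $\langle F(x_k),d_k\rangle$, while the right-hand side tends to $0$ since $\|d_k\|^2$ is a fixed constant for fixed $k$. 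Letting $m\to\infty$ therefore yields $\langle F(x_k),d_k\rangle\ge 0$.

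This contradicts the sufficient-descent estimate of Lemma \ref{lm1}, namely $\langle F(x_k),d_k\rangle\le -c\|F(x_k)\|^2$, which is strictly negative because $F(x_k)\neq 0$. Hence the assumption is untenable, the line search must exit at some finite $m$, and it returns a well-defined step-length $\alpha_k=\beta\rho^m>0$. With every line of a single iteration shown to be executable, the algorithm is well defined.

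I expect the line-search termination to be the only real difficulty; everything else reduces to noting that the method halts the instant a solution is produced. The lone subtlety is bookkeeping in the limit: it is the product $\beta\rho^m$ (not $\rho^m$ alone) that drives the right-hand side to zero, and one should keep $d_k$ and $F(x_k)$ fixed throughout the limit in $m$ so that only the evaluation point moves.
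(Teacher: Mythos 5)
Your proposal is correct and follows essentially the same route as the paper: a proof by contradiction on the termination of the line search at line \ref{ln7}, passing to the limit $m\to\infty$ via continuity of $F$ to obtain $\langle F(x_k),d_k\rangle\geq 0$, which contradicts the sufficient-descent property of Lemma \ref{lm1} since $F(x_k)\neq 0$ inside the main loop. The extra bookkeeping you include (nonvanishing denominators, positive definiteness of $D_k$) is sound but not part of the paper's argument, which treats line-search finiteness as the sole issue.
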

\begin{proof}
Suppose that there exists $k_0\geq 0,$ such that \[\langle\,F(x_{k_0}+\beta \rho ^m d_{k_0}),~d_{k_0}\rangle >-\sigma \beta \rho ^m \|F(x_{k_0}+\beta \rho ^md_{k_0})\|\|d_{k_0}\|^2,~~\forall m\geq 0.\]
By continuity of $F$, allowing $m\to +\infty$, we have
\[\langle\,F(x_{k_0}),~d_{k_0}\rangle \geq 0,\] which contradits the conclusion of Lemma \ref{lm1}.
\end{proof}
The proof of the following Lemma is similar to that of \cite{liu15}.
\begin{lemma}\label{lm3}
Let $\{x_k\}, \text{ and }\{z_k\}$ be the sequences generated by Algorithm \ref{algo_dpm}. Then $\{x_k\}, \text{ and }\{z_k\}$ are bounded. In addition, 
\begin{align*}
   & \lim _{k\to +\infty}\|x_k-z_k\|=0, \text{ more specifically }\lim _{k\to +\infty}\alpha _k\|d_k\|=0.\\
   \text{and}\\
   & \lim _{k\to +\infty}\|x_{k+1}-x_k\|=0. \
\end{align*}
\end{lemma}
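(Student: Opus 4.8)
The plan is to establish the three conclusions of Lemma~\ref{lm3} in sequence, exploiting the monotonicity of $F$ together with the projection structure of the algorithm. First I would fix an arbitrary solution $x^*\in\Phi$, so that $F(x^*)=0$. The key preliminary observation is that by monotonicity, $\langle F(z_k),\,z_k-x^*\rangle\ge\langle F(x^*),\,z_k-x^*\rangle=0$, and combined with the line-search condition at line~\ref{ln7} this forces $\langle F(z_k),\,x_k-x^*\rangle\ge\langle F(z_k),\,x_k-z_k\rangle>0$, so the steplength $\xi_k$ is well-defined and positive. The heart of the boundedness argument is to show that $\{\|x_k-x^*\|\}$ is monotonically decreasing, hence bounded. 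I would expand
\[
\|x_{k+1}-x^*\|^2=\|P_\Omega[x_k-\xi_k F(x_k)]-x^*\|^2,
\]
but note here a likely typo in the algorithm: the projected point should read $P_\Omega[x_k-\xi_k F(z_k)]$ for the standard Solodov--Svaiter argument to work, and I would proceed on that understanding. Using nonexpansiveness (\ref{eqne}) and the fact that $x^*\in\Omega$, this is bounded by $\|x_k-\xi_k F(z_k)-x^*\|^2$; expanding and substituting the definition of $\xi_k$ yields
\[
\|x_{k+1}-x^*\|^2\le\|x_k-x^*\|^2-\frac{\langle x_k-z_k,\,F(z_k)\rangle^2}{\|F(z_k)\|^2}.
\]

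Once this inequality is in hand, monotonic decrease of $\|x_k-x^*\|$ is immediate, which gives boundedness of $\{x_k\}$; boundedness of $\{z_k\}$ then follows since $z_k=x_k+\alpha_k d_k$ and I would need the intermediate step that $\{\|d_k\|\}$ is bounded along the sequence (using $\ell\le\lambda_{k+1}^i\le u$ from Remark, the Lipschitz bound, and the definition (\ref{eqnewdir})). Telescoping the displayed inequality over all $k$ shows that the series $\sum_k \langle x_k-z_k,\,F(z_k)\rangle^2/\|F(z_k)\|^2$ converges, so its general term tends to zero. Since $\{z_k\}$ is bounded and $F$ is continuous, $\|F(z_k)\|$ is bounded above, and therefore $\langle x_k-z_k,\,F(z_k)\rangle\to 0$. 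Invoking the line-search inequality from line~\ref{ln7} in the form
\[
\langle F(z_k),\,x_k-z_k\rangle\ge\sigma\,\alpha_k\|F(z_k)\|\,\|d_k\|^2=\sigma\|F(z_k)\|\,\|x_k-z_k\|\,\|d_k\|
\]
lets me conclude that $\|F(z_k)\|\,\|x_k-z_k\|\,\|d_k\|\to 0$; combined with the established lower bound $\|F(z_k)\|\ge$ (a positive constant, away from any solution) this delivers $\lim_{k\to\infty}\|x_k-z_k\|=\lim_{k\to\infty}\alpha_k\|d_k\|=0$.

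Finally, the third conclusion $\lim_{k\to\infty}\|x_{k+1}-x_k\|=0$ follows from the projection step. Using nonexpansiveness again,
\[
\|x_{k+1}-x_k\|=\|P_\Omega[x_k-\xi_k F(z_k)]-P_\Omega[x_k]\|\le\xi_k\|F(z_k)\|=\frac{\langle x_k-z_k,\,F(z_k)\rangle}{\|F(z_k)\|},
\]
and since the numerator is bounded above by $\|x_k-z_k\|\,\|F(z_k)\|$ (Cauchy--Schwarz), this is at most $\|x_k-z_k\|\to 0$. I expect the main obstacle to be carefully justifying the boundedness of $\{\|d_k\|\}$ so that the descent direction does not blow up: this requires threading together the uniform bounds on the diagonal entries, the Lipschitz estimate on $y_{k-1}$, and the safeguarding rule (\ref{eqnewdir}) that caps $\beta_{k+1}$ precisely when $|\langle F(x_{k+1}),y_k\rangle|\,\|d_k\|$ is large. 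A secondary subtlety is confirming that $\|F(z_k)\|$ stays bounded away from zero along any subsequence not converging to a solution, which is what converts the convergence of the series into the desired limit statements.
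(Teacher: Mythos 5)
Your overall skeleton is the right one and matches the argument the paper itself points to (the paper gives no proof of Lemma \ref{lm3}, deferring to the Solodov--Svaiter-type argument of its cited reference): fix $x^*\in\Phi$, use monotonicity of $F$ and the definition of $\xi_k$ to obtain the Fej\'er-type inequality $\|x_{k+1}-x^*\|^2\le\|x_k-x^*\|^2-\langle x_k-z_k,F(z_k)\rangle^2/\|F(z_k)\|^2$, telescope, and finish with nonexpansiveness; your reading of the projection step $x_{k+1}=P_\Omega[x_k-\xi_kF(x_k)]$ as a typo for $P_\Omega[x_k-\xi_kF(z_k)]$ is also correct and necessary. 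The genuine gap is in how you convert the telescoped series into $\|x_k-z_k\|\to0$. At the accepted step the line search in line \ref{ln7} gives $\langle F(z_k),d_k\rangle\le-\sigma\alpha_k\|F(z_k)\|\|d_k\|^2$; substituting $x_k-z_k=-\alpha_kd_k$ (i.e.\ multiplying by $-\alpha_k$) yields
\[
\langle F(z_k),x_k-z_k\rangle\;\ge\;\sigma\alpha_k^{2}\|F(z_k)\|\|d_k\|^{2}\;=\;\sigma\|F(z_k)\|\,\|x_k-z_k\|^{2},
\]
with $\alpha_k^{2}$, not the single factor $\alpha_k$ you wrote: your inequality $\langle F(z_k),x_k-z_k\rangle\ge\sigma\|F(z_k)\|\,\|x_k-z_k\|\,\|d_k\|$ replaces $-d_k$ by $x_k-z_k$ without picking up the factor $\alpha_k$, and is not what the line search delivers. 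Because of this slip you are forced to invoke a lower bound ``$\|F(z_k)\|\ge$ a positive constant'' (and, implicitly, a lower bound on $\|d_k\|$) to pass from $\|F(z_k)\|\|x_k-z_k\|\|d_k\|\to0$ to $\|x_k-z_k\|\to0$. That hypothesis is unjustified and generally false: the lemma must hold unconditionally for the sequences the algorithm actually produces, and precisely when the method succeeds one expects $F(z_k)\to0$. With the correct inequality no such bound is needed: dividing by $\|F(z_k)\|$ and squaring gives $\sigma^{2}\|x_k-z_k\|^{4}\le\langle x_k-z_k,F(z_k)\rangle^{2}/\|F(z_k)\|^{2}\to0$, hence $\|x_k-z_k\|=\alpha_k\|d_k\|\to0$ directly.

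A second, related misstep is your route to boundedness of $\{z_k\}$ through boundedness of $\{\|d_k\|\}$. In this paper the direction bound (Lemma \ref{lm4}) is proved only under the hypothesis $\|F(x_{k-1})\|\ge\varepsilon$ and is stated after Lemma \ref{lm3}, so appealing to it here is both conditional and circular; moreover your own chain needs $\{z_k\}$ bounded (to bound $\|F(z_k)\|$ above) before you can conclude $\langle x_k-z_k,F(z_k)\rangle\to0$, which you in turn need before discussing $z_k$. The clean order avoids all of this: Fej\'er monotonicity gives $\{x_k\}$ bounded; the corrected quadratic inequality gives $\|x_k-z_k\|\to0$ with no bounds on $F(z_k)$ or $d_k$ whatsoever; then $z_k=x_k-(x_k-z_k)$ is bounded. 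Your final step, $\|x_{k+1}-x_k\|\le\xi_k\|F(z_k)\|\le\|x_k-z_k\|$ by nonexpansiveness and Cauchy--Schwarz, is correct as written once the earlier limits are in place.
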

\begin{lemma}\label{lm4}
Let $\{x_k\}, \text{ and }\{d_k\}$ be the sequences generated by Algorithm \ref{algo_dpm}. If there exists a constant $\varepsilon >0$ such that $\|F(x_{k-1})\|\geq \varepsilon , \quad k\geq 1,$ then there exists a constant $\hat {c}>0$ such that
\begin{equation}\label{ls01}
    \|d_k\|\leq \hat {c}\|F(x_k)\|\quad \forall k\in \mathbb{N}\cup \{0\}.
\end{equation}
\end{lemma}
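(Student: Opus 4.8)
The plan is to bound $\|d_k\|$ by examining the two cases in the search direction definition (\ref{eqnewdir}) separately, and in the second case, further use the two cases in the definition (\ref{eqbita2}) of $\beta_k$. First I would handle the easy branch: when $|\langle F(x_{k+1}),y_k\rangle|\|d_{k}\|\geq \mu \|F(x_{k+1})\|$ holds (shifting indices, when the condition at step $k$ triggers the first branch), we simply have $d_k=-D_kF(x_k)$, so by Remark 2.3 (the uniform bound $\ell\le\lambda_k^i\le u$, hence $1/\lambda_k^i\le 1/\ell$) we get $\|d_k\|\le\frac{1}{\ell}\|F(x_k)\|$ directly. This gives one candidate for $\hat c$.

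For the harder branch $d_k=-D_kF(x_k)+\beta_kd_{k-1}$, I would write $\|d_k\|\le\frac{1}{\ell}\|F(x_k)\|+|\beta_k|\,\|d_{k-1}\|$ via the triangle inequality and the diagonal bound. The main work is to bound the product $|\beta_k|\,\|d_{k-1}\|$. In this branch the defining inequality $|\langle F(x_k),y_{k-1}\rangle|\|d_{k-1}\|< \mu \|F(x_k)\|$ is in force (again after the index shift), which is exactly the lever I want. Using the two-case structure of $\beta_k$ from (\ref{eqbita2}): when $\beta_k=0$ the term vanishes; otherwise I would estimate the two pieces of $\beta_k$ by factoring out $\|d_{k-1}\|$ against the triggering inequality. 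Concretely, the first piece $\frac{\langle F(x_k),y_{k-1}\rangle}{\|F(x_{k-1})\|^2}\|d_{k-1}\|$ is bounded by $\frac{\mu\|F(x_k)\|}{\|F(x_{k-1})\|^2}\le\frac{\mu}{\varepsilon^2}\|F(x_k)\|$ using $\|F(x_{k-1})\|\ge\varepsilon$ together with the triggering inequality $|\langle F(x_k),y_{k-1}\rangle|\|d_{k-1}\|<\mu\|F(x_k)\|$. For the second piece I would similarly peel off one factor of $\langle F(x_k),d_{k-1}\rangle$ or of $\langle F(x_k),y_{k-1}\rangle$ so that a copy of $\|d_{k-1}\|$ meets the triggering bound, and control the remaining factors using Cauchy–Schwarz, $\|F(x_{k-1})\|\ge\varepsilon$, and the boundedness of $\{x_k\}$ (hence of $\{F(x_k)\}$, $\{y_{k-1}\}$) from Lemma \ref{lm3} and Lipschitz continuity.

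The step I expect to be the main obstacle is the second term of $\beta_k$, namely $t\frac{\langle F(x_k),d_{k-1}\rangle}{\|F(x_{k-1})\|^4}\bigl(\frac{\langle F(x_k),y_{k-1}\rangle}{\|F(x_k)\|}\bigr)^2$, because it is cubic in inner products and carries two factors that each involve $d_{k-1}$ implicitly through $\langle F(x_k),d_{k-1}\rangle$ and through the triggering inequality. The delicate point is to arrange the bookkeeping so that after multiplying by $\|d_{k-1}\|$ exactly one factor is absorbed by the triggering inequality $|\langle F(x_k),y_{k-1}\rangle|\|d_{k-1}\|<\mu\|F(x_k)\|$, leaving a clean quantity that is a constant multiple of $\|F(x_k)\|$; the surviving $\|F(x_k)\|$ in the denominator cancels against $\langle F(x_k),y_{k-1}\rangle^2$ in the numerator, and boundedness of the iterates supplies the remaining constants. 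Collecting both branches, I would take $\hat c$ to be the maximum of $1/\ell$ and the constant assembled from $\mu$, $\varepsilon$, $t$, $L$, and the uniform bound on $\|F(x_k)\|$, which yields (\ref{ls01}) for all $k$.
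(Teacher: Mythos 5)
Your skeleton matches the paper's proof: split on the two branches of (\ref{eqnewdir}) (with the index shift), bound the first branch by the diagonal estimate, and in the second branch use the triangle inequality and control $|\beta_k|\,\|d_{k-1}\|$ via the triggering inequality and $\|F(x_{k-1})\|\ge\varepsilon$. Your first branch and the first piece of $\beta_k$ are handled exactly as in the paper (your constant $1/\ell$ is even slightly sharper than the paper's $\sqrt{n}/\ell$). The gap is in the step you yourself flag as the main obstacle. After multiplying the second piece of $\beta_k$ by $\|d_{k-1}\|$, the quantity
\[
t\,\frac{|\langle F(x_k),d_{k-1}\rangle|}{\|F(x_{k-1})\|^4}\cdot\frac{\langle F(x_k),y_{k-1}\rangle^2}{\|F(x_k)\|^2}\cdot\|d_{k-1}\|
\]
contains \emph{two} copies of $d_{k-1}$ (one explicit, one inside $\langle F(x_k),d_{k-1}\rangle$) and two copies of $\langle F(x_k),y_{k-1}\rangle$. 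The triggering inequality $|\langle F(x_k),y_{k-1}\rangle|\|d_{k-1}\|<\mu\|F(x_k)\|$ is the \emph{only} available control on $\|d_{k-1}\|$, so it must be invoked twice, once per copy, and each invocation consumes one factor of $|\langle F(x_k),y_{k-1}\rangle|$. Your bookkeeping spends these factors twice over: you ask for ``exactly one'' application of the triggering inequality, and you also want $\langle F(x_k),y_{k-1}\rangle^2$ to cancel the $\|F(x_k)\|^2$ in the denominator (which, via Cauchy--Schwarz, would leave $\|y_{k-1}\|^2$, to be bounded by Lemma \ref{lm3} and Lipschitz continuity). You cannot have both: if both factors of $\langle F(x_k),y_{k-1}\rangle$ are used to cancel the denominator, nothing absorbs either copy of $\|d_{k-1}\|$; if one factor is spent on the triggering inequality, the single remaining factor cannot cancel the squared denominator. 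Either way an uncontrolled $\|d_{k-1}\|$ survives, and boundedness of $\{x_k\}$, $\{F(x_k)\}$, $\{y_{k-1}\}$ cannot repair this, because $\|d_{k-1}\|$ is precisely the quantity with no a priori bound---it is what the lemma is proving. Following your route one gets at best a recursion of the form $\|d_k\|\le A\|F(x_k)\|+B\|F(x_k)\|\,\|d_{k-1}\|$, which yields (\ref{ls01}) only under a smallness condition on $B$ that is not available (note $\mu=10^{10}$ in the experiments).

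The accounting that closes---and it is the paper's---is: apply Cauchy--Schwarz only to $|\langle F(x_k),d_{k-1}\rangle|\le\|F(x_k)\|\,\|d_{k-1}\|$, so that the term contains the product $\langle F(x_k),y_{k-1}\rangle^2\|d_{k-1}\|^2$, and then apply the triggering inequality in squared form, $\langle F(x_k),y_{k-1}\rangle^2\|d_{k-1}\|^2<\mu^2\|F(x_k)\|^2$. The resulting $\mu^2\|F(x_k)\|^2$ is exactly what cancels the $\|F(x_k)\|^2$ in the denominator, leaving
\[
t\,\frac{\mu^2\|F(x_k)\|}{\|F(x_{k-1})\|^4}\le \frac{t\mu^2}{\varepsilon^4}\,\|F(x_k)\|,
\]
so that $\hat c=\frac{\sqrt{n}}{\ell}+\frac{\mu}{\varepsilon^2}+\frac{t\mu^2}{\varepsilon^4}$ works. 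In particular, contrary to your sketch, neither Lipschitz continuity nor the boundedness conclusions of Lemma \ref{lm3} are needed anywhere in this lemma.
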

\begin{proof}
By the definition of $d_k$ in line \ref{ln17} of Algorithm \ref{algo_dpm}. If $|\langle F(x_{k}),y_{k-1}\rangle|\|d_{k-1}\|\geq \mu \|F(x_{k})\|,$ we have
\begin{align*}
    \|d_k\| &=\|D_kF(x_k)\|\\
    &\leq \|D_k\|\|F(x_k)\|\\
    &\leq \frac{\sqrt{n}}{\ell}\|F(x_k)\|\\
    &= \hat {c}\|F(x_k)\|,
\end{align*}
with $0<\hat{c}=\frac{\sqrt{n}}{\ell},$ where the first inequality follows from the induced matrix norm property, and the second inequality follows from the boundedness of the components of $D_k$.

On the other hand, if $|\langle F(x_{k}),y_{k-1}\rangle|\|d_{k-1}\|< \mu \|F(x_{k})\|,$ we have
\begin{align*}
    \|d_k\| &=\|-D_kF(x_k)+\beta _kd_{k-1}\|\\
    &\leq \|D_kF(x_k)\|+|\beta _k\|\|d_{k-1}\|\\
    &\leq  \|D_k\|\|F(x_k)\|+\frac{|\langle F(x_k),y_{k-1}\rangle|}{\|F(x_{k-1})\|^2}\|d_{k-1}\|\\
    & \quad \quad +\frac{|\langle F(x_k),d_{k-1}\rangle|}{\|F(x_{k-1})\|^4}~.~\frac{\langle F(x_k),y_{k-1}\rangle^2}{\|F(x_{k})\|^2}\|d_{k-1}\|\\
    &\leq \frac{\sqrt{n}}{\ell}\|F(x_k)\|+\frac{\mu \|F(x_k)\|}{\|F(x_{k-1})\|^2}+t\frac{\mu ^2\|F(x_k)\|}{\|F(x_{k-1})\|^4}\\
    &\leq \biggl(\frac{\sqrt{n}}{\ell}+\frac{\mu}{\varepsilon ^2}+t\frac{\mu ^2}{\varepsilon ^4}\biggr)\|F(x_k)\|\\
    &=\hat {c}\|F(x_k)\|,
\end{align*}
with $0<\hat{c}=\frac{\sqrt{n}}{\ell}+\frac{\mu}{\varepsilon ^2}+t\frac{\mu ^2}{\varepsilon ^4},$ where the second inequality is obtained from the induced matrix norm property, the third inequality follows directly from the boundedness of $D_k$, the Cauchy Schwartz inequality and the fact that $|\langle F(x_{k}),y_{k-1}\rangle|\|d_{k-1}\|< \mu \|F(x_{k})\|.$ And the proof is complete.
\end{proof}
 Now, we are ready to establish the global convergence of Algorithm \ref{algo_dpm}.
 \begin{theorem}
 Let $\{x_k\}, \text{ and }\{z_k\}$ be the sequences generated by Algorithm \ref{algo_dpm}. Then
 \begin{equation}\label{eqconv}
     \liminf_{k\to \infty}\|F(x_k)\|=0.
 \end{equation}
 Furthermore, the sequence $\{x_k\}$ converges to $\tilde{x}\in \Omega$ such that $F(\tilde{x})=0$.
 \end{theorem}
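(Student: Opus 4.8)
The plan is to split the statement into its two assertions and prove the first one, $\liminf_{k\to\infty}\|F(x_k)\|=0$, by contradiction. Suppose instead that there is a constant $\varepsilon>0$ with $\|F(x_k)\|\geq\varepsilon$ for all $k$. Since $\{x_k\}$ is bounded (Lemma \ref{lm3}) and $F$ is continuous, $\{\|F(x_k)\|\}$ is bounded; moreover the standing hypothesis $\|F(x_{k-1})\|\geq\varepsilon$ lets me invoke Lemma \ref{lm4} to get $\|d_k\|\leq\hat c\,\|F(x_k)\|$, so $\{\|d_k\|\}$ is bounded above. On the other hand, combining the sufficient-descent estimate of Lemma \ref{lm1} with the Cauchy--Schwarz inequality gives $\|d_k\|\geq c\|F(x_k)\|\geq c\varepsilon>0$, so $\{\|d_k\|\}$ is also bounded away from zero.

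Next I would use the relation $\alpha_k\|d_k\|\to 0$ from Lemma \ref{lm3}. Together with the lower bound $\|d_k\|\geq c\varepsilon$ this forces $\alpha_k\to 0$, hence the accepted step size is eventually a strict reduction, i.e. for all large $k$ the larger trial step $\alpha_k/\rho$ failed the line-search test at line \ref{ln7}. Recording this failure yields, for large $k$,
\[
\langle\,F(x_k+(\alpha_k/\rho)d_k),~d_k\rangle > -\sigma(\alpha_k/\rho)\,\|F(x_k+(\alpha_k/\rho)d_k)\|\,\|d_k\|^2 .
\]
I would then pass to a convergent subsequence $x_{k_j}\to\bar x$ (possible by boundedness) and, using the boundedness of $\{d_k\}$, a further subsequence with $d_{k_j}\to\bar d$. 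Since $(\alpha_{k_j}/\rho)\|d_{k_j}\|\to 0$, the argument $x_{k_j}+(\alpha_{k_j}/\rho)d_{k_j}\to\bar x$, and continuity of $F$ turns the displayed inequality, in the limit, into $\langle F(\bar x),\bar d\rangle\geq 0$. But Lemma \ref{lm1} gives $\langle F(x_{k_j}),d_{k_j}\rangle\leq -c\|F(x_{k_j})\|^2\leq -c\varepsilon^2$, so in the limit $\langle F(\bar x),\bar d\rangle\leq -c\varepsilon^2<0$, a contradiction. This establishes $\liminf_{k\to\infty}\|F(x_k)\|=0$.

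For the second assertion I would first extract, from $\liminf_{k\to\infty}\|F(x_k)\|=0$, a subsequence $\{x_{k_j}\}$ with $\|F(x_{k_j})\|\to 0$; boundedness gives a sub-subsequence converging to some $\tilde x\in\Omega$, and continuity gives $F(\tilde x)=0$, so $\tilde x$ is a solution. The crucial ingredient is the Fej\'er-type inequality underlying Lemma \ref{lm3}: for every $x^*\in\Phi$ one has $\|x_{k+1}-x^*\|\leq\|x_k-x^*\|$, obtained from the monotonicity of $F$, the definition of $\xi_k$, and the nonexpansiveness (\ref{eqne}) of $P_\Omega$. Applying it with $x^*=\tilde x$ shows $\{\|x_k-\tilde x\|\}$ is nonincreasing, hence convergent; since the subsequence $\|x_{k_j}-\tilde x\|\to 0$, the whole sequence satisfies $\|x_k-\tilde x\|\to 0$, i.e. $x_k\to\tilde x$ with $F(\tilde x)=0$.

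The main obstacle I anticipate is the limiting step in the first part: one must take limits simultaneously along a common subsequence of $\{x_k\}$ and $\{d_k\}$, and the boundedness of $\{d_k\}$ that makes Bolzano--Weierstrass applicable is available only through Lemma \ref{lm4}, whose hypothesis is exactly the negation $\|F(x_{k-1})\|\geq\varepsilon$ being assumed for contradiction. Care is also needed to confirm that the line-search index is eventually at least one (so the failure inequality is legitimately available) and that the right-hand side of that inequality vanishes in the limit, both of which follow from $\alpha_k\to 0$ and the two-sided bounds on $\|d_k\|$.
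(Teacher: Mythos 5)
Your proposal is correct and follows essentially the same route as the paper's own proof: a contradiction argument that combines the boundedness and $\alpha_k\|d_k\|\to 0$ conclusions of Lemma \ref{lm3}, the bound of Lemma \ref{lm4}, the failed line-search inequality at the trial step $\alpha_k/\rho$, and subsequence limits played against the sufficient-descent estimate of Lemma \ref{lm1}, followed by Fej\'er monotonicity (the content of Lemma \ref{lm3}) to upgrade an accumulation point with $F(\tilde{x})=0$ to convergence of the whole sequence. Your write-up is in fact more explicit than the paper's (which phrases the dichotomy in terms of $\liminf_{k\to\infty}\|d_k\|$ rather than $\liminf_{k\to\infty}\|F(x_k)\|$ and leaves the limiting details implicit), but the underlying argument is the same.
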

 \begin{proof}
 If $\displaystyle\liminf_{k\to \infty}\|d_k\|=0,$ we have  $\displaystyle\liminf _{k\to \infty}\|F(x_k)\|=0.$ 
 
Suppose $\displaystyle\liminf_{k\to \infty}\|d_k\|>0,$ we have  $\displaystyle\liminf _{k\to \infty}\|F(x_k)\|>0.$ \\
Using the line search in line (\ref{ln7}) of Algorithm \ref{algo_dpm}, \[-F(x_k+\beta \rho ^{m_{k-1}}d_k)^Td_k <\sigma \beta \rho ^{m_{k-1}}\|F(x_k+\beta\rho ^{m_{k-1}}d_k\|\|d_k\|^2\] 

and the boundedness of $\{x_k\}, \{d_k\}$ from Lemmas \ref{lm3}-\ref{lm4},  we can choose a subsequence such that allowing $k$ to go to infinity in the above inequality results
\begin{equation}\label{eqcntra1}
-\langle F(\tilde{x}),~\tilde{d}\rangle \leq 0.
\end{equation}
On the other hand, from Lemma (\ref{lm1}) we have
\begin{equation}\label{eqcontra2}
-\langle F(\tilde{x}),~\tilde{d}\rangle > 0.
\end{equation}
 Thus, (\ref{eqcntra1}) and (\ref{eqcontra2}) implies a contradiction. Therefore, (\ref{eqconv}) holds.

Furthermore, by continuity of $F$, the sequence $\{x_k\}$ has some accumulation point $\tilde{x}$ such that $F(\tilde{x})=0.$ Since $\{\|x_k-\tilde{x}\|\}$ converges (Lemma \ref{lm3}) and $\tilde{x}$ is an accumulation point of $\{x_k\}$, it follows that $\{x_k\}$ converges to $\tilde{x}$.\\

 \end{proof}
\section{Numerical Experiments}
In this section we present some numerical experiments to assess the efficiency of the proposed DPPM algorithm. We chose the following parameters for the implementation of DPPM algorithm  $\rho =0.8, \sigma =0.01, \theta = 0.1 ,\ell =\varepsilon =10^{-10}$, and $u=\mu = 10^{10}$.
In order to test the effectiveness of the proposed method, we compare it with the recent method (MDYP) proposed in \cite{liu2017}. The parameters in MDYP algorithm are chosen as in \cite{liu2017}. The initial trial stepsize $\beta$ is chosen uniformly for both algorithms as
\[ \beta =\frac{\langle F(x_k),~d_k\rangle}{\langle d_k, ~ (F(x_k+\gamma d_k)-F(x_k)\rangle /\gamma},\]
where $\gamma =10^{-8}$. If $\beta \le 10^{-6}$ holds, we set $\beta =1$.

We examine $5$ test problems (see, Appendix) using $5$ different dimensions $1000, 5000,10000$, $50000\text{ and }100000$ and $8$ different initial points (see, Table \ref{t1}).

The numerical results are reported in Tables \ref{t2}-\ref{t5} for number of iterations (ITER), number of function evaluation (FVAL), CPU time (TIME) and the norm of the residual function $F$ at the approximate solution (NORM). We chose the stopping criterion as follows:
\begin{itemize}
    \item $\|F(x_k)\|\le 10^{-5}$, and
    \item The number of iterations exceeds $1000$.
\end{itemize} 
A failure is reported and denoted by $\textbf{f}$ if any of the tested algorithms failed to satisfy the above stopping criterion.
All methods were coded in MATLAB R2017a and run on a Dell PC with an Intel Corei3 processor, 2.30GHz CPU speed, 4GB of RAM and Windows 7 operating system.

\begin{table}
\centering
  \caption{The initial points used for the test problems}
  \resizebox{6cm}{!}{%
\begin{tabular}{cl}
    \multicolumn{1}{c}{INITIAL POINT} & \multicolumn{1}{c}{VALUE} \\
    \midrule
       $x_1$   & $(1,1,\ldots ,1)^T$ \\
        $x_2$  & $(0.1,0.1,\ldots ,0.1)^T$ \\
        $x_3$  & $\bigl(\frac{1}{2}, \frac{1}{2^2},\ldots ,\frac{1}{2^n}\bigr)^T$ \\
       $x_4$   & $\bigl(1-\frac{1}{n}, 2-\frac{2}{n},\ldots ,n-1\bigr)^T$ \\
       $x_5$   & $\bigl(0,\frac{1}{n}, \ldots ,\frac{n-1}{n}\bigr)^T$ \\
       $x_6$   &  $\bigl(1,\frac{1}{2},\ldots ,\frac{1}{n}\bigr)^T$\\
       $x_7$   & $\bigl(\frac{n-1}{n}, \frac{n-2}{n},\ldots ,0 \bigr)^T$ \\
       $x_8$   &  $\bigl(\frac{1}{n}, \frac{2}{n},\ldots ,1\bigr)^T$\\
    \bottomrule
    \end{tabular}%
    }
  \label{t1}%
\end{table}%
\begin{figure}
\vspace{\fill}
\begin{center}
\includegraphics[width=15cm]{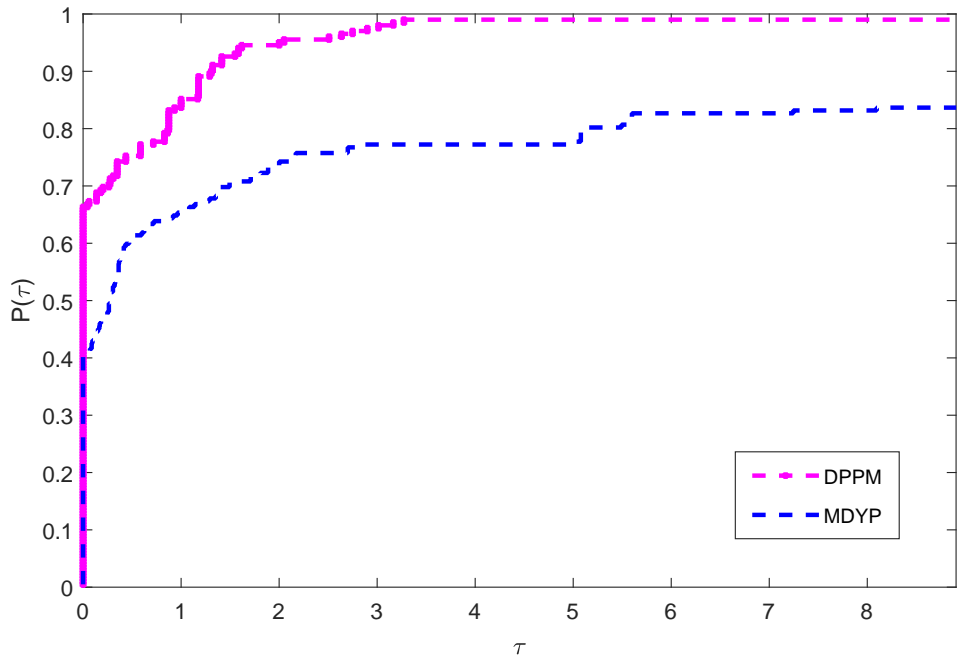}
\vspace{0.00001cm}{\textbf{\caption{Performance profile with respect to number of iterations (ITER)}\label{fig1}}} \addtocontents{lof}{\protect\addvspace{2cm}}
\end{center}

\end{figure}

\begin{figure}
\vspace{\fill}
\begin{center}
\includegraphics[width=15cm]{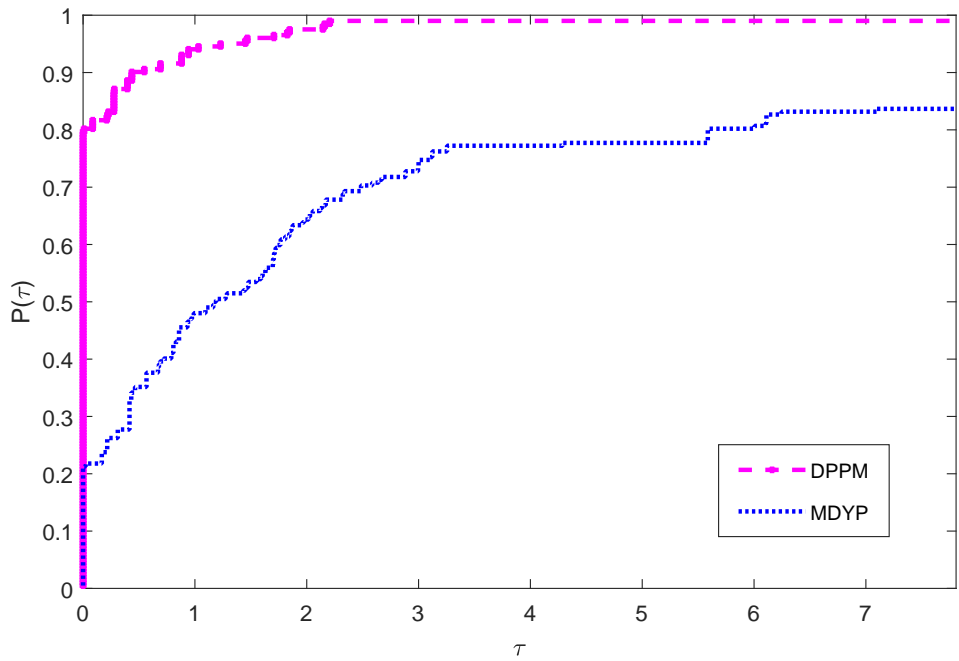}
\vspace{0.00001cm}{\textbf{\caption{Performance profile  with respect to number of function evaluations}\label{fig2}}} \addtocontents{lof}{\protect\addvspace{2cm}}
\end{center}
\end{figure}

\begin{figure}
\vspace{\fill}
\begin{center}
\includegraphics[width=15cm]{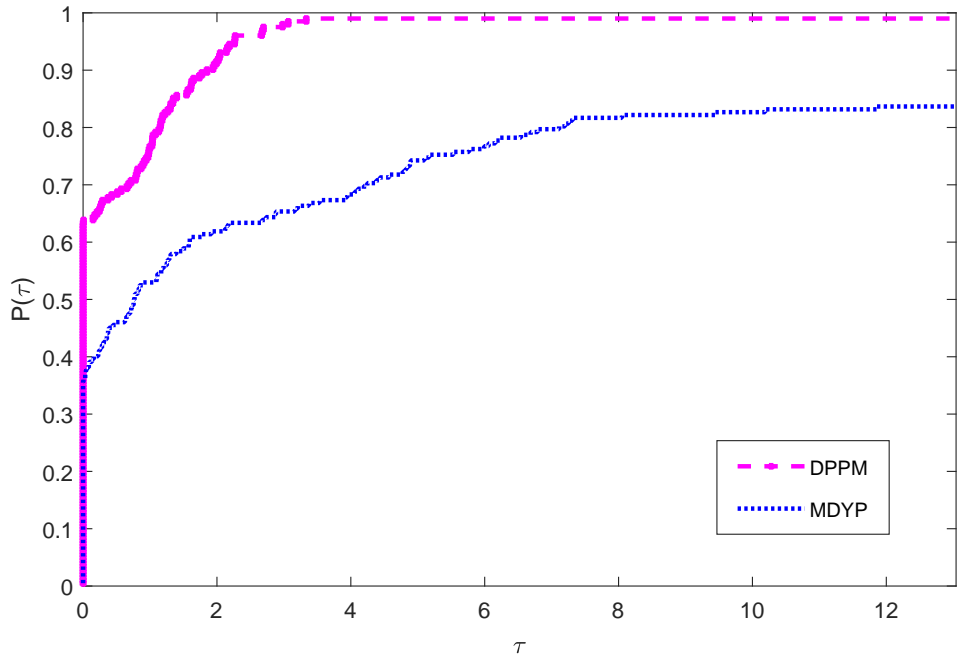}
\vspace{0.00001cm}{\textbf{\caption{Performance profile with respect to CPU time}\label{fig3}}} \addtocontents{lof}{\protect\addvspace{2cm}}
\end{center}
\end{figure}

\begin{table}[htbp]
  \centering
  \caption{Numerical Results for DPPM and MDYP for Problem 1 with given initial points and dimension}
  \vspace{\fill}
  \resizebox{\columnwidth}{!}{%
    \begin{tabular}{cc|cccc|cccc}
    \toprule
          &       & \multicolumn{4}{|c|}{\textbf{DPPM}} & \multicolumn{4}{c}{\textbf{MDYP}} \\
    \midrule
    DIM   & INITIAL      POINT & ITER  & FVAL  & TIME  & NORM  & ITER  & FVAL  & TIME  & NORM \\
    \multirow{8}[16]{*}{1000} & $x_1$    & 38    & 82    & 0.39369 & 7.67E-06 & 33    & 161   & 0.11807 & 9.44E-06 \\
          & $x_2$    & 32    & 70    & 0.10322 & 7.15E-06 & 39    & 211   & 0.10552 & 9.40E-06 \\
          & $x_3$    & 120   & 252   & 0.32442 & 9.26E-06 & 49    & 256   & 0.70059 & 8.20E-06 \\
          & $x_4$    & 1     & 3     & 0.01260 & 0.00E+00 & 1     & 4     & 0.02221 & 0 \\
          & $x_5$    & 38    & 82    & 0.12030 & 8.15E-06 & 49    & 273   & 0.06990 & 8.50E-06 \\
          & $x_6$    & 37    & 79    & 0.04453 & 9.49E-06 & 57    & 311   & 0.06868 & 7.94E-06 \\
          & $x_7$    & 39    & 84    & 0.09649 & 8.57E-06 & 55    & 286   & 0.06015 & 7.37E-06 \\
          & $x_8$    & 37    & 80    & 0.11587 & 9.69E-06 & 56    & 291   & 0.09784 & 9.87E-06 \\
          \midrule
    \multirow{8}[16]{*}{5000} & $x_1$    & 39    & 84    & 0.20890 & 8.86E-06 & 52    & 260   & 0.32704 & 7.10E-06 \\
          & $x_2$    & 32    & 69    & 0.35442 & 8.07E-06 & 32    & 186   & 0.16735 & 8.25E-06 \\
          & $x_3$    & 120   & 252   & 0.63706 & 9.26E-06 & 53    & 283   & 4.06230 & 9.09E-06 \\
          & $x_4$    & 1     & 3     & 0.02699 & 0.00E+00 & 1     & 4     & 0.19802 & 0 \\
          & $x_5$    & 39    & 84    & 0.33551 & 9.77E-06 & 54    & 308   & 0.34295 & 7.27E-06 \\
          & $x_6$    & 37    & 79    & 0.23014 & 9.48E-06 & 61    & 328   & 0.49383 & 9.07E-06 \\
          & $x_7$    & 41    & 88    & 0.27738 & 8.21E-06 & 46    & 245   & 0.32441 & 9.16E-06 \\
          & $x_8$   & 39    & 84    & 0.26216 & 9.78E-06 & 61    & 327   & 0.30915 & 8.09E-06 \\
          \midrule
    \multirow{8}[16]{*}{10000} & $x_1$    & 40    & 86    & 0.36125 & 8.26E-06 & 42    & 208   & 0.30183 & 7.25E-06 \\
          & $x_2$   & 32    & 69    & 0.22630 & 9.73E-06 & 38    & 192   & 0.24845 & 7.01E-06 \\
          & $x_3$    & 120   & 252   & 1.21740 & 9.26E-06 & 53    & 283   & 10.89370 & 9.09E-06 \\
          & $x_4$    & 1     & 3     & 0.05100 & 0.00E+00 & 1     & 4     & 0.15397 & 0 \\
          & $x_5$    & 40    & 86    & 0.50858 & 9.29E-06 & 64    & 351   & 0.24912 & 8.09E-06 \\
          & $x_6$    & 37    & 79    & 0.30249 & 9.48E-06 & 42    & 216   & 0.79410 & 9.07E-06 \\
          & $x_7$    & 42    & 90    & 0.40105 & 7.71E-06 & 59    & 323   & 0.47751 & 8.01E-06 \\
          & $x_8$   & 40    & 86    & 0.34861 & 9.35E-06 & 78    & 434   & 0.59400 & 8.21E-06 \\
          \midrule
    \multirow{8}[16]{*}{50000} & $x_1$    & 120   & 250   & 4.32320 & 9.72E-06 & 41    & 206   & 1.63900 & 6.58E-06 \\
          & $x_2$    & 34    & 73    & 0.77121 & 7.41E-06 & 33    & 164   & 1.38310 & 7.51E-06 \\
          & $x_3$    & 120   & 252   & 1.88700 & 9.26E-06 & 53    & 283   & 134.87960 & 9.09E-06 \\
          & $x_4$   & 1     & 3     & 0.26091 & 0.00E+00 & 1     & 4     & 2.40870 & 0 \\
          & $x_5$    & 55    & 117   & 2.36960 & 8.44E-06 & 69    & 386   & 3.03180 & 7.28E-06 \\
          & $x_6$    & 37    & 79    & 1.52390 & 9.49E-06 & 49    & 256   & 1.06890 & 9.47E-06 \\
          & $x_7$    & 55    & 117   & 1.79620 & 8.60E-06 & 66    & 398   & 3.07370 & 6.51E-06 \\
          & $x_8$    & 55    & 117   & 1.38480 & 8.44E-06 & 61    & 356   & 1.54750 & 7.43E-06 \\
          \midrule
    \multirow{8}[16]{*}{100000} & $x_1$    & 156   & 323   & 6.91310 & 9.59E-06 & 63    & 321   & 2.36500 & 7.88E-06 \\
          & $x_2$    & 34    & 73    & 2.56550 & 8.93E-06 & 30    & 145   & 1.09880 & 8.96E-06 \\
          & $x_3$    & 120   & 252   & 4.15440 & 9.26E-06 & 53    & 283   & 460.82220 & 9.09E-06 \\
          & $x_4$    & 1     & 3     & 0.16653 & 0.00E+00 & 1     & 4     & 11.10340 & 0 \\
          & $x_5$   & 73    & 154   & 3.33150 & 9.20E-06 & 78    & 503   & 4.35460 & 9.93E-06 \\
          & $x_6$    & 37    & 79    & 1.37560 & 9.49E-06 & 50    & 259   & 3.27380 & 8.34E-06 \\
          & $x_7$    & 72    & 152   & 4.76270 & 9.88E-06 & 69    & 445   & 6.17050 & 9.63E-06 \\
          & $x_8$    & 73    & 154   & 3.36260 & 9.20E-06 & 59    & 345   & 3.37380 & 7.33E-06 \\
    \bottomrule
    \end{tabular}%
    }
  \label{t2}%
\end{table}%

\begin{table}[htbp]
  \centering
  \caption{Numerical Results for DPPM and MDYP for Problem 2 with given initial points and dimension}
  \vspace{\fill}
  \resizebox{\columnwidth}{!}{%
    \begin{tabular}{cc|cccc|cccc}
    \toprule
          &       & \multicolumn{4}{|c|}{\textbf{DPPM}} & \multicolumn{4}{c}{\textbf{MDYP}} \\
    \midrule
    DIMENSION & INITIAL      POINT & ITER  & FVAL  & TIME  & NORM  & ITER  & FVAL  & TIME  & NORM \\
    \multirow{8}[16]{*}{1000} & $x_1$    & 2     & 5     & 0.01205 & 0.00E+00 & 13    & 40    & 0.036007 & 3.39E-06 \\
          & $x_2$   & 2     & 5     & 0.00846 & 0.00E+00 & 7     & 22    & 0.019472 & 5.13E-06 \\
          & $x_3$    & 12    & 25    & 0.091209 & 4.33E-06 & 8     & 25    & 0.080773 & 6.13E-06 \\
          & $x_4$    & 4     & 20    & 0.051613 & 0.00E+00 & 130   & 391   & 0.12599 & 6.48E-06 \\
          & $x_5$   & 9     & 19    & 0.026844 & 0.00E+00 & 6     & 19    & 0.016339 & 0.00E+00 \\
          & $x_6$    & 9     & 19    & 0.023649 & 9.24E-07 & 4     & 13    & 0.012304 & 0 \\
          & $x_7$    & 9     & 19    & 0.027893 & 0.00E+00 & 6     & 19    & 0.013595 & 0 \\
          & $x_8$    & 9     & 19    & 0.023562 & 0     & 6     & 19    & 0.011906 & 0.00E+00 \\
          \midrule
    \multirow{8}[16]{*}{5000} & $x_1$   & 2     & 5     & 0.066537 & 0.00E+00 & 13    & 40    & 0.11026 & 7.50E-06 \\
          & $x_2$    & 2     & 5     & 0.022989 & 0.00E+00 & 8     & 25    & 0.068553 & 4.94E-06 \\
          & $x_3$    & 11    & 23    & 0.083563 & 9.62E-06 & 10    & 31    & 1.2681 & 3.78E-06 \\
          & $x_4$    & 3     & 18    & 0.04319 & 0.00E+00 & 455   & 1366  & 1.22  & 0 \\
          & $x_5$    & 11    & 23    & 0.15631 & 0.00E+00 & 6     & 19    & 0.01868 & 0.00E+00 \\
          & $x_6$    & 10    & 21    & 0.15303 & 2.04E-06 & 4     & 13    & 0.034344 & 0 \\
          & $x_7$    & 11    & 23    & 0.10842 & 0.00E+00 & 6     & 19    & 0.052581 & 0 \\
          & $x_8$   & 11    & 23    & 0.13773 & 0     & 6     & 19    & 0.045172 & 0.00E+00 \\
          \midrule
    \multirow{8}[16]{*}{10000} & $x_1$    & 2     & 5     & 0.067238 & 0.00E+00 & 14    & 43    & 0.11068 & 2.89E-06 \\
          & $x_2$    & 2     & 5     & 0.064798 & 0.00E+00 & 8     & 25    & 0.088129 & 6.92E-06 \\
          & $x_3$    & 12    & 25    & 0.15601 & 4.61E-06 & 10    & 31    & 3.2523 & 5.11E-06 \\
          & $x_4$    & 3     & 18    & 0.13358 & 0.00E+00 & 821   & 2464  & 4.5746 & 8.32E-06 \\
          & $x_5$    & 11    & 23    & 0.20715 & 5.65E-06 & 6     & 19    & 0.13156 & 0.00E+00 \\
          & $x_6$   & 10    & 21    & 0.15698 & 6.69E-06 & 4     & 13    & 0.06196 & 0 \\
          & $x_7$   & 11    & 23    & 0.13313 & 5.65E-06 & 6     & 19    & 0.064923 & 0 \\
          & $x_8$   & 11    & 23    & 0.12422 & 5.65E-06 & 6     & 19    & 0.068141 & 0.00E+00 \\
          \midrule
    \multirow{8}[16]{*}{50000} & $x_1$    & 23    & 50    & 1.2087 & 6.40E-06 & 14    & 43    & 0.60565 & 6.46E-06 \\
          & $x_2$    & 2     & 5     & 0.16224 & 0.00E+00 & 9     & 28    & 0.38936 & 1.05E-06 \\
          & $x_3$    & 11    & 23    & 0.51367 & 6.39E-06 & 10    & 31    & 27.959 & 5.37E-06 \\
          & $x_4$    & 3     & 18    & 0.36926 & 0.00E+00 &  f     &   f    &  f     & f \\
          & $x_5$   & 12    & 25    & 0.39204 & 6.62E-06 & 6     & 19    & 0.11452 & 0.00E+00 \\
          & $x_6$    & 12    & 25    & 0.74105 & 5.28E-07 & 4     & 13    & 0.073491 & 0 \\
          & $x_7$    & 12    & 25    & 0.50712 & 6.62E-06 & 6     & 19    & 0.11134 & 0 \\
          & $x_8$    & 12    & 25    & 0.59486 & 6.62E-06 & 6     & 19    & 0.2658 & 0.00E+00 \\
          \midrule
    \multirow{8}[16]{*}{100000} & $x_1$    & 56    & 119   & 4.2791 & 7.39E-06 & 14    & 43    & 1.0548 & 9.13E-06 \\
          &$x_2$   & 2     & 5     & 0.35045 & 0.00E+00 & 9     & 28    & 0.56004 & 1.49E-06 \\
          & $x_3$    & 11    & 23    & 0.8678 & 8.25E-06 & 10    & 31    & 89.3792 & 5.41E-06 \\
          & $x_4$    & 3     & 18    & 0.65425 & 0.00E+00 &   f    &   f    &  f     & f  \\
          & $x_5$   & 16    & 35    & 1.1296 & 9.42E-06 & 6     & 19    & 0.64241 & 0.00E+00 \\
          & $x_6$    & 12    & 25    & 0.75194 & 1.15E-06 & 4     & 13    & 0.38111 & 0 \\
          & $x_7$   & 16    & 35    & 1.5137 & 9.42E-06 & 6     & 19    & 0.48211 & 0 \\
          & $x_8$    & 16    & 35    & 1.0041 & 9.42E-06 & 6     & 19    & 0.57489 & 0.00E+00 \\
    \bottomrule
    \end{tabular}%
    }
  \label{t3}%
\end{table}%

\begin{table}[htbp]
  \centering
  \caption{Numerical Results for DPPM and MDYP for Problem 3 with given initial points and dimension}
  \vspace{\fill}
  \resizebox{\columnwidth}{!}{%
    \begin{tabular}{cc|cccc|cccc}
    \toprule
          &       & \multicolumn{4}{|c|}{\textbf{DPPM}} & \multicolumn{4}{c}{\textbf{MDYP}} \\
    \midrule
    DIMENSION & INITIAL      POINT & ITER  & FVAL  & TIME  & NORM  & ITER  & FVAL  & TIME  & NORM \\
    \multirow{8}[16]{*}{1000} & $x_1$ & 9     & 20    & 0.012702 & 9.41E-06 & 12    & 39    & 0.028509 & 9.82E-06 \\
          & $x_2$ & 8     & 18    & 0.027765 & 8.11E-06 & 10    & 32    & 0.012253 & 9.57E-06 \\
          & $x_3$ & 7     & 16    & 0.03306 & 8.25E-06 & 9     & 29    & 0.081886 & 2.35E-06 \\
          & $x_4$ & 242   & 496   & 0.43655 & 9.42E-06 & 36    & 152   & 0.090671 & 4.38E-06 \\
          & $x_5$ & 10    & 22    & 0.018539 & 4.98E-06 & 37    & 210   & 0.070034 & 2.98E-06 \\
          & $x_6$ & 9     & 20    & 0.016934 & 3.23E-06 & 17    & 62    & 0.020589 & 6.28E-06 \\
          & $x_7$ & 10    & 22    & 0.016338 & 4.98E-06 & 37    & 210   & 0.072627 & 2.98E-06 \\
          & $x_8$ & 10    & 22    & 0.015586 & 5.00E-06 & 22    & 99    & 0.034992 & 2.15E-06 \\
          \midrule
    \multirow{8}[16]{*}{5000} & $x_1$ & 10    & 22    & 0.10112 & 4.21E-06 & 13    & 42    & 0.058415 & 3.54E-06 \\
          & $x_2$ & 9     & 20    & 0.10815 & 3.63E-06 & 11    & 35    & 0.048767 & 3.71E-06 \\
          & $x_3$ & 7     & 16    & 0.046493 & 8.25E-06 & 9     & 29    & 1.1076 & 2.35E-06 \\
          & $x_4$ & 269   & 550   & 1.7034 & 9.31E-06 & 30    & 123   & 0.21588 & 3.44E-06 \\
          & $x_5$ & 11    & 24    & 0.08236 & 2.23E-06 & 36    & 209   & 0.37061 & 3.49E-06 \\
          & $x_6$ & 9     & 20    & 0.072288 & 3.24E-06 & 22    & 87    & 0.13458 & 6.38E-06 \\
          & $x_7$ & 11    & 24    & 0.14758 & 2.23E-06 & 36    & 209   & 0.50368 & 3.49E-06 \\
          & $x_8$ & 11    & 24    & 0.081127 & 2.23E-06 & 20    & 84    & 0.13688 & 8.44E-06 \\
          \midrule
    \multirow{8}[16]{*}{10000} & $x_1$ & 10    & 22    & 0.14814 & 5.95E-06 & 13    & 42    & 0.12261 & 5.00E-06 \\
          & $x_2$ & 9     & 20    & 0.081035 & 5.13E-06 & 11    & 35    & 0.066221 & 5.24E-06 \\
          & $x_3$ & 7     & 16    & 0.067335 & 8.25E-06 & 9     & 29    & 3.1159 & 2.35E-06 \\
          & $x_4$ & 280   & 572   & 1.7613 & 9.79E-06 & 45    & 244   & 0.57493 & 7.67E-06 \\
          & $x_5$ & 11    & 24    & 0.053928 & 3.16E-06 & 31    & 177   & 0.39748 & 1.82E-06 \\
          & $x_6$ & 9     & 20    & 0.14158 & 3.24E-06 & 18    & 72    & 0.1053 & 8.19E-06 \\
          & $x_7$ & 11    & 24    & 0.16218 & 3.16E-06 & 31    & 177   & 0.27428 & 1.82E-06 \\
          & $x_8$ & 11    & 24    & 0.16983 & 3.16E-06 & 29    & 153   & 0.17573 & 6.17E-06 \\
          \midrule
    \multirow{8}[16]{*}{50000} & $x_1$ & 43    & 92    & 2.0725 & 9.57E-06 & 14    & 45    & 0.52068 & 4.52E-06 \\
          & $x_2$ & 10    & 22    & 0.33935 & 2.29E-06 & 12    & 38    & 0.36153 & 4.63E-06 \\
          & $x_3$ & 7     & 16    & 0.14291 & 8.25E-06 & 9     & 29    & 22.695 & 2.35E-06 \\
          & $x_4$ & 307   & 626   & 7.8832 & 9.69E-06 & 38    & 174   & 1.9062 & 9.53E-06 \\
          & $x_5$ & 17    & 37    & 0.36148 & 4.12E-06 & 44    & 295   & 9.8907 & 6.42E-06 \\
          & $x_6$ & 9     & 20    & 0.13877 & 3.24E-06 & 23    & 90    & 8.835 & 6.56E-06 \\
          & $x_7$ & 17    & 37    & 0.34472 & 4.12E-06 & 44    & 295   & 9.8494 & 6.41E-06 \\
          & $x_8$ & 17    & 37    & 0.79731 & 4.12E-06 & 36    & 222   & 2.2409 & 4.06E-06 \\
          \midrule
    \multirow{8}[16]{*}{100000} & $x_1$ & 58    & 123   & 4.0543 & 7.82E-06 & 14    & 45    & 0.63779 & 6.39E-06 \\
          & $x_2$ & 10    & 22    & 1.0512 & 3.24E-06 & 12    & 38    & 0.46115 & 6.55E-06 \\
          & $x_3$ & 7     & 16    & 0.59808 & 8.25E-06 & 9     & 29    & 87.9961 & 2.35E-06 \\
          & $x_4$ & 319   & 650   & 15.465 & 9.35E-06 & 33    & 147   & 3.2193 & 7.98E-06 \\
          & $x_5$ & 32    & 69    & 2.4051 & 9.29E-06 & 34    & 224   & 3.9158 & 4.61E-06 \\
          & $x_6$ & 9     & 20    & 0.70022 & 3.24E-06 & 21    & 81    & 10.7951 & 4.64E-06 \\
          & $x_7$ & 32    & 69    & 2.3197 & 9.29E-06 & 34    & 224   & 4.1301 & 4.61E-06 \\
          & $x_8$ & 32    & 69    & 2.5293 & 9.29E-06 & 37    & 225   & 2.6843 & 4.58E-06 \\
    \bottomrule

    \end{tabular}%
    }
  \label{t4}%
\end{table}%

\begin{table}[htbp]
  \centering
  \caption{Numerical Results for DPPM and MDYP for Problem 4 with given initial points and dimension}
  \vspace{\fill}
  \resizebox{\columnwidth}{!}{%
    \begin{tabular}{cc|cccc|cccc}
    \toprule
          &       & \multicolumn{4}{|c|}{\textbf{DPPM}} & \multicolumn{4}{c}{\textbf{MDYP}} \\
    \midrule
    DIMENSION & INITIAL      POINT & ITER  & FVAL  & TIME  & NORM  & ITER  & FVAL  & TIME  & NORM \\
    \multirow{8}[16]{*}{1000} & $x_1$ & 1     & 3     & 0.006994 & 0.00E+00 & f     & f     & f     & f \\
          & $x_2$ & 5     & 11    & 0.016786 & 0.00E+00 & f     & f     & f     & f \\
          & $x_3$ & 9     & 19    & 0.024407 & 1.87E-06 & 304   & 913   & 3.7303 & 9.98E-06 \\
          & $x_4$ & 185   & 382   & 1.0441 & 0.00E+00 & f     & f     & f     & f \\
          & $x_5$ & 11    & 23    & 0.058111 & 9.06E-06 & f     & f     & f     & f \\
          & $x_6$ & 19    & 40    & 0.074199 & 6.95E-06 & 856   & 2569  & 2.6683 & 1.00E-05 \\
          & $x_7$ & 11    & 23    & 0.042337 & 9.06E-06 & f     & f     & f     & f \\
          & $x_8$ & 24    & 50    & 0.14774 & 4.23E-06 & f     & f     & f     & f \\
          \midrule
    \multirow{8}[16]{*}{5000} & $x_1$ & 1     & 3     & 0.020524 & 0.00E+00 & f     & f     & f     & f \\
          & $x_2$ & 5     & 11    & 0.090725 & 0.00E+00 & f     & f     & f     & f \\
          & $x_3$ & 9     & 19    & 0.090678 & 1.87E-06 & 304   & 913   & 24.0402 & 9.98E-06 \\
          & $x_4$ & 203   & 418   & 2.2173 & 0.00E+00 & 870   & 2612  & 22.9668 & 9.99E-06 \\
          & $x_5$ & 13    & 27    & 0.26952 & 4.42E-06 & f     & f     & f     & f \\
          & $x_6$ & 19    & 40    & 0.32377 & 7.56E-06 & 918   & 2755  & 5.3977 & 9.98E-06 \\
          & $x_7$ & 13    & 27    & 0.35444 & 4.42E-06 & f     & f     & f     & f \\
          & $x_8$ & 25    & 52    & 0.42654 & 3.97E-06 & f     & f     & f     & f \\
          \midrule
    \multirow{8}[16]{*}{10000} & $x_1$ & 1     & 3     & 0.026783 & 0.00E+00 & f     & f     & f     & f \\
          & $x_2$ & 5     & 11    & 0.15274 & 0.00E+00 & f     & f     & f     & f \\
          & $x_3$ & 9     & 19    & 0.092308 & 1.87E-06 & 304   & 913   & 64.261 & 9.98E-06 \\
          & $x_4$ & 211   & 434   & 2.9705 & 0.00E+00 & f     & f     & f     & f \\
          & $x_5$ & 13    & 27    & 0.53653 & 2.93E-06 & f     & f     & f     & f \\
          & $x_6$ & 19    & 40    & 0.76099 & 7.49E-06 & 921   & 2764  & 8.8082 & 9.99E-06 \\
          & $x_7$ & 13    & 27    & 0.37658 & 2.93E-06 & f     & f     & f     & f \\
          & $x_8$ & 25    & 52    & 0.80643 & 5.62E-06 & f     & f     & f     & f \\
          \midrule
    \multirow{8}[16]{*}{50000} & $x_1$ & 1     & 3     & 0.13612 & 0.00E+00 & f     & f     & f     & f \\
          & $x_2$ & 5     & 11    & 0.75059 & 0.00E+00 & f     & f     & f     & f \\
          & $x_3$ & 9     & 19    & 0.64213 & 1.87E-06 & 304   & 913   & 750.0724 & 9.98E-06 \\
          & $x_4$ & 229   & 470   & 13.7584 & 0.00E+00 & f     & f     & f     & f \\
          & $x_5$ & 11    & 24    & 1.5439 & 2.13E-06 & f     & f     & f     & f \\
          & $x_6$ & 19    & 40    & 1.8662 & 7.45E-06 & 921   & 2764  & 35.4218 & 1.00E-05 \\
          & $x_7$ & 11    & 24    & 0.89235 & 2.13E-06 & f     & f     & f     & f \\
          & $x_8$ & 11    & 24    & 0.69469 & 2.12E-06 & f     & f     & f     & f \\
          \midrule
    \multirow{8}[16]{*}{100000} & $x_1$ & 1     & 3     & 0.17122 & 0.00E+00 & f     & f     & f     & f \\
          & $x_2$ & 5     & 11    & 1.2485 & 0.00E+00 & f     & f     & f     & f \\
          & $x_3$ & 9     & 19    & 0.73873 & 1.87E-06 & 304   & 913   & 2739.78 & 9.98E-06 \\
          & $x_4$ & 237   & 486   & 28.0708 & 0.00E+00 & f     & f     & f     & f \\
          & $x_5$ & 15    & 33    & 2.7825 & 1.35E-06 & f     & f     & f     & f \\
          & $x_6$ & 19    & 40    & 2.4742 & 7.45E-06 & 921   & 2764  & 73.568 & 1.00E-05 \\
          & $x_7$ & 15    & 33    & 3.0101 & 1.35E-06 & f     & f     & f     & f \\
          & $x_8$ & 15    & 33    & 1.8552 & 1.35E-06 & f     & f     & f     & f \\
    \bottomrule
    \end{tabular}%
    }
  \label{t5}%
\end{table}%

\begin{table}[htbp]
  \centering
  \caption{Numerical Results for DPPM and MDYP for Problem 4 with given initial points and dimension}
  \vspace{\fill}
  \resizebox{\columnwidth}{!}{%
    \begin{tabular}{cc|cccc|cccc}
    \toprule
          &       & \multicolumn{4}{|c|}{\textbf{DPPM}} & \multicolumn{4}{c}{\textbf{MDYP}} \\
    \midrule
    DIMENSION & INITIAL      POINT & ITER  & FVAL  & TIME  & NORM  & ITER  & FVAL  & TIME  & NORM \\
    \multirow{8}[16]{*}{1000} & $x_1$ & 20    & 44    & 0.019887 & 7.58E-06 & 11    & 38    & 0.025455 & 3.21E-06 \\
          & $x_2$ & 8     & 18    & 0.022029 & 8.00E-06 & 9     & 29    & 0.009151 & 8.71E-06 \\
          & $x_3$ & 12    & 27    & 0.028013 & 4.40E-06 & 12    & 40    & 0.12209 & 1.70E-06 \\
          & $x_4$ & 1     & 3     & 0.004682 & 0.00E+00 & 1     & 4     & 0.014134 & 0 \\
          & $x_5$ & 17    & 37    & 0.035034 & 5.83E-06 & 21    & 79    & 0.031607 & 6.66E-06 \\
          & $x_6$ & 14    & 31    & 0.017107 & 8.77E-06 & 11    & 36    & 0.02408 & 3.32E-06 \\
          & $x_7$ & 17    & 37    & 0.042426 & 5.83E-06 & 21    & 79    & 0.028564 & 6.66E-06 \\
          & $x_8$ & 17    & 37    & 0.051895 & 5.84E-06 & 17    & 61    & 0.023712 & 6.86E-06 \\
          \midrule
    \multirow{8}[16]{*}{5000} & $x_1$ & 21    & 46    & 0.18004 & 8.27E-06 & 11    & 38    & 0.040818 & 7.19E-06 \\
          & $x_2$ & 9     & 20    & 0.084557 & 3.58E-06 & 10    & 32    & 0.044893 & 8.33E-06 \\
          & $x_3$ & 12    & 27    & 0.099041 & 4.40E-06 & 12    & 40    & 1.7057 & 1.70E-06 \\
          & $x_4$ & 1     & 3     & 0.025493 & 0.00E+00 & 1     & 4     & 0.16941 & 0 \\
          & $x_5$ & 23    & 50    & 0.17716 & 5.38E-06 & 19    & 69    & 0.095303 & 2.22E-06 \\
          & $x_6$ & 14    & 31    & 0.22375 & 8.76E-06 & 11    & 36    & 0.058293 & 3.32E-06 \\
          & $x_7$ & 23    & 50    & 0.31409 & 5.38E-06 & 19    & 69    & 0.065043 & 2.22E-06 \\
          & $x_8$ & 23    & 50    & 0.14158 & 5.38E-06 & 27    & 122   & 0.17497 & 3.72E-06 \\
          \midrule
    \multirow{8}[16]{*}{10000} & $x_1$ & 22    & 48    & 0.24683 & 5.71E-06 & 12    & 41    & 0.21346 & 4.72E-06 \\
          & $x_2$ & 9     & 20    & 0.08746 & 5.06E-06 & 11    & 35    & 0.09006 & 1.49E-06 \\
          & $x_3$ & 12    & 27    & 0.051866 & 4.40E-06 & 12    & 40    & 3.8951 & 1.70E-06 \\
          & $x_4$ & 1     & 3     & 0.018948 & 0.00E+00 & 1     & 4     & 0.55884 & 0 \\
          & $x_5$ & 23    & 50    & 0.20947 & 7.61E-06 & 17    & 62    & 0.16185 & 9.13E-06 \\
          & $x_6$ & 14    & 31    & 0.19068 & 8.76E-06 & 11    & 36    & 0.061976 & 3.32E-06 \\
          & $x_7$ & 23    & 50    & 0.2562 & 7.61E-06 & 17    & 62    & 0.13079 & 9.13E-06 \\
          & $x_8$ & 23    & 50    & 0.31204 & 7.61E-06 & 29    & 114   & 0.66644 & 9.92E-06 \\
          \midrule
    \multirow{8}[16]{*}{50000} & $x_1$ & 74    & 156   & 2.2287 & 8.50E-06 & 13    & 44    & 0.34412 & 7.07E-07 \\
          & $x_2$ & 10    & 22    & 0.59981 & 2.26E-06 & 11    & 35    & 0.24221 & 3.33E-06 \\
          & $x_3$ & 12    & 27    & 0.30703 & 4.40E-06 & 12    & 40    & 28.6017 & 1.70E-06 \\
          & $x_4$ & 1     & 3     & 0.12101 & 0.00E+00 & 1     & 4     & 2.2182 & 0 \\
          & $x_5$ & 32    & 69    & 0.69063 & 7.53E-06 & 18    & 65    & 0.19135 & 8.62E-06 \\
          & $x_6$ & 14    & 31    & 0.30697 & 8.76E-06 & 11    & 36    & 0.38975 & 3.32E-06 \\
          & $x_7$ & 32    & 69    & 1.7399 & 7.53E-06 & 18    & 65    & 0.44778 & 8.62E-06 \\
          & $x_8$ & 32    & 69    & 0.96137 & 7.53E-06 & 18    & 65    & 0.37787 & 8.97E-06 \\
          \midrule
    \multirow{8}[16]{*}{100000} & $x_1$ & 97    & 203   & 4.8452 & 8.54E-06 & 13    & 44    & 0.75655 & 1.00E-06 \\
          & $x_2$ & 10    & 22    & 0.43113 & 3.20E-06 & 11    & 35    & 0.4291 & 4.71E-06 \\
          & $x_3$ & 12    & 27    & 0.83201 & 4.40E-06 & 12    & 40    & 115.0905 & 1.70E-06 \\
          & $x_4$ & 1     & 3     & 0.39874 & 0.00E+00 & 1     & 4     & 8.6205 & 0 \\
          & $x_5$ & 43    & 92    & 2.8254 & 8.61E-06 & 19    & 68    & 1.1444 & 2.35E-06 \\
          & $x_6$ & 14    & 31    & 1.1267 & 8.76E-06 & 11    & 36    & 0.38034 & 3.32E-06 \\
          & $x_7$ & 43    & 92    & 2.3561 & 8.61E-06 & 19    & 68    & 1.0552 & 2.35E-06 \\
          & $x_8$ & 43    & 92    & 2.8962 & 8.61E-06 & 19    & 68    & 0.70571 & 2.41E-06 \\
    \bottomrule
    \end{tabular}%
    }
  \label{t6}%
\end{table}%

Figures \ref{fig1}-\ref{fig3} shows the Dolan and Mor$\acute{e}$ \cite{dolan2002} performance profiles based on the number of iterations,  the number of functions evaluations, and CPU time. 

It can be observed from Figure \ref{fig1} , DPPM solves and wins  almost $70\%$, while MSDY solves and wins less than $50\%$ of the $200$ problems. In terms of the number of function evaluation Figure \ref{fig2} shows that DPPM outperforms MDYP, as it solves about $80\%$ of the problems with the least number of functions evaluation. This good performance of SSGM2 is related to the efficiency of its search direction since it requires less function evaluation for computing the accepted steplength. 

In terms of the CPU time metric, DPPM is faster than MDYP with more than $60\%$ success. In summary, the proposed DPPM method is more successful than MDYP method based on the numerical experiments presented. This is due to the improvement in the search direction.

\section{Conclusions and Future Research}
We proposed a diagonal Polak-Ribi$\grave{e}$re-Polyk (PRP) type projection method for solving convex constrained nonlinear monotone equations problems (DPPM). The proposed method requires neither the exact Jacobian computation nor its storage space. This is an advantage, especially for large-scale nonsmooth problems. The global convergence of the proposed method is obtained without any merit function or differentiability assumptions on the residual function. Moreover, we proposed a simple strategy for ensuring that the components of the diagonal matrices are safely positive at each iteration.

In future research, we intend to study better techniques to use for improving the search direction with a suitable line search strategy. Because of the low memory requirement of the DPPM method, we hope it will perform well when applied to solve practical problems arising from compressed sensing and image processing this is also another subject for future research.

\section*{Appendix}
In this section we list the test problems used for the numerical experiments.\\
\indent \textbf{Problem 1} \cite{lacruz2006} Modified Exponential Function
\begin{equation*}
\begin{aligned}
F_1(x)&=e^{x_1}-1, \\
F_i(x)&=e^{x_i}-x(i-1)-1, ~ \text{for }i=2,3,...,n.\\
\text{and } &\Omega=R_{+}^n.
\end{aligned}
\end{equation*}
\indent \textbf{Problem 2} \cite{lacruz2006} Modified Logarithmic Function
\begin{equation*}
\begin{aligned}
F_i(x)& =\ln(|x_i|+1)-\frac{x_i}{n}, ~ \text{for }i=2,3,...,n.\\
\text{and } &\Omega=R_{+}^n.
\end{aligned}
\end{equation*}
\indent \textbf{Problem 3} Nonsmooth Function \cite{zhouli07}
\begin{equation*}
\begin{aligned}
F_i(x)& =2x_i-\sin |x_i|, ~ i=1,2,3,...,n.\\
\text{and } &\Omega=R_{+}^n.
\end{aligned}
\end{equation*}
\indent \textbf{Problem 4}\cite{lacruz2017} \begin{equation*}
\begin{aligned}
F_i(x)&=\min(\min(|x_i|,x_{i}^2),\max(|x_i|,x_{i}^3)), ~ i=1,2,3,...,n.\\
\text{and } &\Omega=R_{+}^n.
\end{aligned}
\end{equation*}
\indent \textbf{Problem 5} \cite{lacruz2006} Strictly Convex Function
\begin{equation*}
\begin{aligned}
F_i(x)&=e^{x_i}-1, ~ \text{for }i=1,2,...,n.\\
\text{and } &\Omega=R_{+}^n.
\end{aligned}
\end{equation*}

\section*{Acknowledgements}
We would like to thank Professor (Associate) Jinkui Liu affiliated at the Chongqing Three Gorges University, Chongqing, China, for providing us with access to the MDYP MATLAB codes.

\newpage
\bibliographystyle{acm}
\bibliography{dprpref}
\end{document}